\newif\ifpix \pixtrue
\numberwithin{equation}{subsection}
 \def\PP{{\mathbb{P}}} 
\def\ZZ{{\mathbb{Z}}}    \def\QQ{{\mathbb{Q}}} \def\CC{{\mathbb{C}}}
\def\RR{{\mathbb{R}}}    
\def\cO{{\mathcal{O}}}
\def\cS{{\mathscr{S}}}
\def\Fut{{\mathfrak{F}}}
\def\Kc{{\mathscr{K}}}
\def\Kr{{\mathscr{R}(\Sbar)}}
\def\cL{{\mathscr{L}}}
\def\cX{{\mathcal{X}}}
\def\cXhat{{\widehat {\mathcal{X}}}}
\def\cM{{\mathcal{M}}}
\def\cE{{\mathscr{E}}}
\def\cMhat{{\widehat {\mathcal{M}}}}
\def\cMbar{{\overline {\mathcal{M}}}}
\def\cY{{\mathscr{Y}}}
\def\cF{{\mathcal{F}}}
\def\cP{{\mathcal{P}}}
\renewcommand{\epsilon}{\varepsilon}
\newcommand{\SU}{\mathrm{SU}}
\newcommand{\CP}{\mathbb{CP}}
\newcommand{\del}{\partial}
\newcommand{\delb}{\bar\partial}
\renewcommand{\Im}{\mathrm{Im}}
\newcommand{\chiorb}{\chi^{{orb}}}
\newcommand{\Xbar}{{\overline{X}}}
\newcommand{\Xhat}{{\widehat{X}}}
\newcommand{\Sbar}{\overline{\Sigma}}
\newcommand{\cXbar}{\overline{\cX}}
\newcommand{\parmu}{\mathrm{par}\mu}
\newcommand{\orbideg}{\mathrm{orb\,deg}}
\newcommand{\orbic}{c_1^{orb}}
\newcommand{\pardeg}{\mathrm{par\,deg}}
\newcommand{\hkto}{\hookrightarrow}
\newcommand{\resp}{\emph{resp. }}
\newcommand{\orb}{\mathrm{orb}}
\newtheorem{lemma}[equation]{Lemma}
\newtheorem{prop}[equation]{Proposition}
\newtheorem{cor}[equation]{Corollary}
\newtheorem{theo}[equation]{Theorem}
\newtheorem{theointro}{Theorem}
\theoremstyle{definition}
\newtheorem{dfn}[equation]{Definition}
\theoremstyle{remark}
\newtheorem{rmk}[equation]{Remark}
\newtheorem{example}[equation]{Example}
\newtheorem*{rmk*}{Remark}
\newtheorem*{rmks*}{Remarks}
\date{March 21st 2013}
\title{K-stability and parabolic stability}
\subjclass[2000]{Primary 32Q26; Secondary 53C55, 58E11, 14J26, 14H60}
\author{Yann Rollin}
\address{Yann Rollin, Laboratoire Jean Leray (UMR 6629), Universit\'e de Nantes}
\email{yann.rollin@univ-nantes.fr}
\begin{document}
{\Huge \sc \bf\maketitle}
\begin{abstract}
Parabolic structures with rational weights encode certain iterated blowups
of geometrically ruled surfaces. In this paper, we show that the three
notions of parabolic
polystability, $K$-polystability and existence of constant scalar
curvature Kähler metrics on the iterated blowup are equivalent, for certain polarizations
close to the boundary of the Kähler cone.
\end{abstract}

\section{Introduction}
\label{sec:intro}
The Calabi program is concerned with finding canonical metrics on 
Kähler manifolds. The idea is to look for critical points of the \emph{Calabi
functional}, i.e. the
$L^2$-norm of the scalar curvature, within a prescribed Kähler class. Such
metrics are called \emph{extremal metrics}. The existence problem for
extremal metrics is  open, even for complex surfaces.
The Donaldson-Tian-Yau conjecture roughly says that the existence of extremal metrics with
integral Kähler class should
be equivalent to some
algebro-geometric notion of stability of the corresponding polarized
complex manifold.

The Euler-Lagrange equation for an extremal metric $g$ is equivalent
to the fact that $(\delb s)^\sharp$ --- the $(1,0)$-component of the gradient of the scalar
curvature of $g$ --- is a holomorphic vector field. If the complex
manifold does not carry any
nontrivial holomorphic vector field, a Kähler metric is extremal if
and only if it has constant scalar curvature.
It seems reasonable, at first, to focus on this ``generic'' case, thus limiting
our study to \emph{constant scalar curvature Kähler metrics} (we shall use
the acronym \emph{CSCK} as a shorthand).

Ruled surfaces are an excellent probing playground for the
Donaldson-Tian-Yau conjecture. In this paper, we
study iterated blowups of ruled surfaces encoded by parabolic
structures. Our main result, stated below, shows that  the Donaldson-Tian-Yau
conjecture holds for such class of surfaces and certain
polarizations. In addition, we prove that stability of parabolic
bundles plays a fundamental role in the picture.  
The rest of the introduction will be devoted to explain the relevant
definitions.
\begin{theointro}
\label{theo:A}
Let $\cX\to \Sigma$ be  a parabolic geometrically ruled surface
with rational weights and $\cXhat\to \cX$ the iterated blowup encoded
by the parabolic structure.

If $\cXhat$ has no nontrivial holomorphic
vector fields, the following properties are equivalent:
 \begin{enumerate}
 \item  $\cXhat$ is basically CSCK,
\item $\cXhat$ is basically K-stable, 
\item  $\cX\to\Sigma$ is parabolically stable.
 \end{enumerate}
\end{theointro}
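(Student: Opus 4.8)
The plan is to establish the cycle of implications $(1)\Rightarrow(2)\Rightarrow(3)\Rightarrow(1)$, of which the last carries the analytic weight. The implication $(1)\Rightarrow(2)$ is essentially formal: existence of a CSCK metric in an integral polarization forces K-polystability by the work of Donaldson, Stoppa and Mabuchi, and since $\cXhat$ has no holomorphic vector field its automorphism group is discrete, so K-polystability and K-stability coincide. One only has to check that the rational polarizations entering ``basically CSCK'' are of the type to which these results apply, after rescaling to integral classes, and that they agree with those used in ``basically K-stable''; proximity to the boundary of the Kähler cone serves here only to fix the relevant family of polarizations.

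For $(2)\Rightarrow(3)$ I argue by contraposition. Write $\cX=\PP(E)\to\Sigma$ and suppose the parabolic bundle is not stable, so there is a parabolic sub-line-bundle $F\subset E$ with $\parmu(F)\ge\parmu(E)$, where $\parmu$ is the parabolic slope. Such an $F$ cuts out a section $S\subset\cX$; I take its proper transform $\widehat S\subset\cXhat$ and form the deformation to the normal cone of $\widehat S$, polarized by the boundary-adjacent classes. A parabolic refinement of the slope-stability computation of Ross--Thomas for ruled surfaces should show that, to leading order in the parameter $\epsilon$ measuring the distance to the boundary of the cone, the Donaldson--Futaki invariant of this test configuration is a positive multiple of $\parmu(E)-\parmu(F)\le 0$, contradicting K-stability; the borderline case $\parmu(F)=\parmu(E)$ either forces a splitting $E=F\oplus(E/F)$ of parabolic bundles --- hence a holomorphic vector field on $\cXhat$, which is excluded --- or is ruled out by inspecting the subleading terms of the invariant.

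The core of the theorem is $(3)\Rightarrow(1)$, and this is where I expect the main obstacle. Parabolic stability of $E$, through the Mehta--Seshadri correspondence (equivalently the Narasimhan--Seshadri theorem on the orbifold Riemann surface determined by the rational weights), produces an irreducible projectively flat unitary structure on $E$; its projectivization makes the orbifold ruled surface $\PP(E)$ into a CSCK orbifold, carrying a Kähler metric that is locally a Riemannian product of two constant-curvature metrics. By construction $\cXhat$ is the minimal resolution of the cyclic quotient singularities of this orbifold prescribed by the weights. I would then run a gluing argument: excise small neighbourhoods of the orbifold points and glue in the scalar-flat Kähler ALE metrics on the corresponding Hirzebruch--Jung resolutions (the Calderbank--Singer models, with Burns's metric as the simplest instance, as used by Rollin--Singer in their study of scalar-flat Kähler surfaces and parabolic stability). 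This yields, for each small $\epsilon$, an approximately CSCK metric $\omega_\epsilon$ on $\cXhat$ whose Kähler class tends, as $\epsilon\to0$, to the pullback of the orbifold polarization; hence the polarizations so produced are exactly those close to the boundary of the Kähler cone appearing in the statement. Finally one perturbs $\omega_\epsilon$ to a genuine CSCK metric by the implicit function theorem.

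Two points make this last step delicate, and I would single them out as the real work. First, one must introduce weighted Hölder spaces adapted to the degenerating family and prove that the linearization of the scalar curvature operator at $\omega_\epsilon$ --- essentially the Lichnerowicz operator --- is invertible on zero-average functions with a bound uniform in $\epsilon$; its cokernel is canonically the space of holomorphic vector fields on $\cXhat$, which is zero by hypothesis, so no obstruction survives, but the absence of small eigenvalues must be established by patching the estimates on $\PP(E)$ (which has no holomorphic vector fields, the monodromy being irreducible) with those on the ALE models. Second, one must choose the blow-up parameters so that $\omega_\epsilon$ truly represents a class in the Kähler cone and so that the finitely many leading-order obstructions to constant scalar curvature --- a balancing system relating the blow-up sizes at the various orbifold points to the parabolic data --- can be solved; solvability of this balancing system is precisely where the strict slope inequalities of parabolic stability re-enter. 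I would close by observing that the hypothesis of no holomorphic vector fields is used in all three implications --- to upgrade polystability to stability on the algebraic side, and to kill the obstruction on the analytic side --- and that dropping it would require recasting the statement in terms of extremal metrics and relative K-polystability.
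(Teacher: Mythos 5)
Your architecture coincides with the paper's on two of the three implications: $(1)\Rightarrow(2)$ is exactly Stoppa's theorem (this is all the paper uses, the absence of holomorphic vector fields making K-polystability and K-stability coincide), and $(3)\Rightarrow(1)$ is the Mehta--Seshadri orbifold CSCK metric on the ruled orbifold $\cXbar$, glued to Calderbank--Singer ALE scalar-flat models at the cyclic singularities and perturbed with no obstruction because $\cXhat$ has no nontrivial holomorphic vector fields --- this is the paper's gluing theorem (a uniform refinement of Arezzo--Pacard, producing extremal metrics in \emph{all} classes near the ray, which are CSCK since there are no vector fields). One slip there: your ``balancing system whose solvability uses the strict slope inequalities'' is not how the argument runs; strict stability enters only to guarantee irreducibility, hence absence of holomorphic vector fields, hence absence of obstructions and the identification extremal $=$ CSCK.

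The genuine divergence, and the gap, is $(2)\Rightarrow(3)$. The paper does not deform to the normal cone of $\widehat S$: it degenerates the extension $E=E_\tau$ to the split bundle $L_+\oplus L_-$, blows up the whole family $\CC^*$-equivariantly to get a test configuration $\cMhat\to\CC$ with \emph{smooth} central fiber (the iterated blowup of $\PP(L_+\oplus L_-)$ encoded by the induced parabolic structure), and then computes the classical Futaki invariant of that central fiber by a LeBrun--Simanca-type moment-map computation adapted to the non-semi-free circle action, with continued-fraction bookkeeping of the weights $w_j^\pm$ along the exceptional strings; Donaldson's comparison for regular test configurations converts this into the Donaldson--Futaki invariant. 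Your Ross--Thomas route is admissible in principle (slope instability would contradict K-stability), but its crux --- that for the degenerating, boundary-adjacent polarizations the Donaldson--Futaki invariant of the normal-cone configuration is, to leading order, a positive multiple of $\parmu(E)-\parmu(F)$, with the parabolic weights correctly reconstituted from the exceptional divisors and the small areas assigned to them --- is asserted with ``should show'' and never established; this is precisely where the paper's hard work sits (its Section 4, where the limit of the Futaki invariant is $(1-\bar s^{\Omega}/12\pi)\,\parmu(S_+)$ and the positivity of the prefactor itself requires pushing the class toward the ray). Your borderline case is also too quick: when $\parmu(F)=\parmu(E)$ but $E$ does not split parabolically there is no vector field to exclude, the leading term vanishes identically, and one needs a genuine second-order analysis --- the paper's semistable lemma shows the Futaki invariant takes both signs in every cone about the ray, via derivatives in the parameters $\tau^\pm$ --- so ``inspecting the subleading terms'' names the problem without solving it. As written, $(2)\Rightarrow(3)$ is a plausible plan rather than a proof.
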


\subsection{Parabolic ruled surfaces}
A geometrically
ruled surface is obtained as the
projectivization $\cX=\PP(E)$ of some 
holomorphic complex vector bundle of rank~$2$  over a closed Riemann surface
 $E\to\Sigma$ and is endowed with a canonical projection $\pi_\Sigma:\cX\to \Sigma$. 
More generally, a ruled surface $\cXhat$ can be described as an iterated
blowup $\pi_{\cX}:\cXhat\to \cX$ of a geometrically ruled
surface $\pi_\Sigma:\cX \to \Sigma$. 

A \emph{parabolic structure}  on a geometrically ruled surface $\pi_\Sigma:\cX\to
\Sigma$ consists of the following data:
\begin{itemize}
\item A  finite set of distinct marked points $y_1,\cdots, y_m\in
  \Sigma$; 
\item marked points   $x_1,\cdots ,x_m\in\cX$ such that
 $\pi_\Sigma(x_j)=y_j$; 
\item real numbers $\alpha_1,\cdots,\alpha_m \in (0,1)$ associated to
  each marked point and called the weights of the marked points. 
\end{itemize}
The geometrically ruled surface together with its parabolic structure is simply
called a \emph{parabolic ruled surface}.

We consider smooth holomorphic curves $S\subset \cX$ such that
$\pi_\Sigma|_S:S\to \Sigma$ has degree $1$, in other words, \emph{holomorphic
sections} of $\cX\to\Sigma$. The \emph{parabolic slope} of $S$ is
defined by the formula
$$\parmu(S) = [S]^2 +\sum_{x_j\not\in S}\alpha_j - \sum_{x_j\in S}\alpha_j,
$$
where $[S]\in H_2(\cX,\ZZ)$ is the homology class of $S$ and $[S]^2$
its self-intersection. In the rest of this text, the homology class of
a curve $S$ will be denoted $S$ as well, without using the brackets.

A parabolic ruled surface is \emph{stable}  if  
  $\parmu(S)
>0$ for
every holomorphic section~$S$.
More generally, we say that 
a parabolic ruled surface  $\cX\to \Sigma$ is \emph{polystable}, if it is
 stable, or if
 there are two non-intersecting holomorphic sections $S_-$ and $S_+$
 with vanishing parabolic slope
  (i.e. sections such that $S_+\cdot
S_-=0$ and $\parmu(S_\pm)=0$).

\begin{rmk}
A parabolic structure on $\cX=\PP(E)\to \Sigma$ gives a line
$x_j\subset E_{y_j}$. This data together with the choice of a pair of weights $0\leq
\beta_1^j<\beta_2^j<1$ such that $\beta^j_2-\beta^j_1=\alpha_j$ for
each point $y_j$  defines a
parabolic structure on the vector bundle $E\to\Sigma$ in the sense of
Mehta-Seshadri~\cite{MehSes80}. With our conventions we have
$\parmu (S)= \pardeg(E) - 2\pardeg(L)$, where $\pardeg$ is the
parabolic degree of a parabolic bundle in the sense of
Mehta-Seshadri and $L$ is the  line sub-bundle corresponding to $S$.

By definition,  the  notions of parabolic  stability for a parabolic
ruled surface $\cX\to \Sigma$ are
equivalent to the various notions of parabolic stability in the sense
of Mehta-Seshadri for the underlying parabolic vector bundle $E\to \Sigma$ (cf. \cite{RolSin05}
for more details).
\end{rmk}

\subsection{Iterated blowups of a parabolic ruled surface with
  rational weights}
\label{secitbup}
Our main result deals with parabolic structures with rational
weights. We shall use the conventions
$\alpha_j=\frac{p_j}{q_j}$ where $0<p_j<q_j$ with $p_j$ and $q_j$
coprime integers.

 In such situation, the marked points and rational weights define an iterated
blowup  $\cXhat\to \cX$ introduced in \cite{RolSin05}. We recall the
construction as it is an essential ingredient of this paper. 
In order to simplify the notations, we pretend that the parabolic
structure on $\cX$ is reduced to a single point $y\in \Sigma$; let $x$ be the corresponding point in $F = \pi^{-1}(y)$ and let
$\alpha = \frac pq$ be the weight. 

 The first step is to
blowup the point $x$, to get a diagram of the form
\begin{equation*}
\xymatrix{
{}\ar@{-}[rr]^{-1}_{\hat F} && *+[o][F-]{}
\ar@{-}[rr]^{-1}_{\hat E} &&{}
}
\end{equation*}
Here the edges represent rational curves, the number above each edge  
is the self-intersection of the curve and
the hollow dots represent transverse intersections with intersection number
$+1$. The curve  $\hat F$
 is the proper transform
 of $F$, whereas the other component $\hat E$ is the exceptional divisor
of
the blowup at $x$.

By blowing-up the intersection point  of $\hat F$ and $\hat E$ we get the
diagram
\begin{equation*}
\xymatrix{
{}\ar@{-}[rr]^{-2} && *+[o][F-]{} 
\ar@{-}[rr]^{-1} && *+[o][F]{}
\ar@{-}[rr]^{-2} &&{} 
}
\end{equation*}
The $-1$-curve above  has exactly two intersection points with
the rest of the string. 
We can decide to blowup either one of them and we carry on with this
iterative procedure, blowing-up at each step one of the two
intersection point of the $-1$-curve. 
 After a finite number  of blowups we obtain an iterated blowup
$$\pi_{\cX}:\cXhat\to \cX$$
with $\pi_{\cX}^{-1}(F)$ given by the  configuration of
curves below:
\begin{equation*}
\xymatrix{
{}\ar@{-}[r]^{-e^-_1}_{E^-_1} & *+[o][F-]{}
\ar@{-}[r]^{ -e_{2}}_{E^-_2} &  *+[o][F-]{}
\ar@{--}[r] &  *+[o][F-]{}
\ar@{-}[r]^{-e^-_{k-1}}_{E^-_{k-1}} &  *+[o][F-]{}
\ar@{-}[r]^{-e^-_k}_{E^-_k} &  *+[o][F-]{}
\ar@{-}[r]^{-1}_{E_0} &  *+[o][F-]{}
\ar@{-}[r]^{-e^+_{l}}_{E^+_l} &  *+[o][F-]{}
\ar@{-}[r]^{-e^+_{l-1}}_{E^+_{l-1}} &  *+[o][F-]{}
\ar@{--}[r] &  *+[o][F-]{}
\ar@{-}[r]^{-e^+_{2}}_{E^+_2} &  *+[o][F-]{}
\ar@{-}[r]^{ -e^+_{1}}_{E^+_1} & 
}
\end{equation*}
where the curves $E^\pm_j$ have 
 self-intersection $-e^\pm_j \leq -2$ and $E^-_1$  is the proper
 transform of the
 fiber $F =\pi^{-1}_{\Sigma}(y)\subset \cX$.
It turns out that there is
exactly one way to perform the iterated blowup so that the integers $e^-_j$ are given by
the continued fraction expansion of $\alpha$:
\begin{equation}\label{e1.844}
\alpha = \frac pq =  \cfrac{1}{e^-_1-\cfrac{1}{e^-_2-\cdots\cfrac{1}{e^-_k}}};
\end{equation}
Then the $e^+_j$'s are given by the continued fraction
\begin{equation}\label{e20.844}
1-\alpha = \frac {q-p}{q} = \cfrac{1}{e^+_1-\cfrac{1}{e^+_2-\cdots
\cfrac{1}{e^+_l}}}.
\end{equation}
Note that these expansions are unique since we are assuming $e^\pm_j \geq 2$.

If the parabolic structure has more marked points, we perform iterated
blowups in the same manner for each marked point and correponding
weight.

\subsection{From parabolic to orbifold ruled surfaces}
Contracting the strings of  $E^\pm_j$-curves in $\cXhat$ gives an
orbifold surface $\cXbar$ and $\pi_{\cXbar}:\cXhat\to\cXbar$ is the 
minimal resolution.
Replacing the marked points $y_j$ of $\Sigma$ with orbifold
singularities of order $q_j$, we obtain an orbifold Riemann surface
$\Sbar$. 
It turns out that there is a holomorphic map of orbifolds 
$$\pi_{\Sbar}:\cXbar\to
\Sbar$$
 which gives $\cXbar$ the structure of a geometrically
ruled orbifold surface. All these facts are detailed in  \cite{RolSin05} where
the structure of the orbifold singularities is studied precisely.

\subsection{Near a boundary ray of the Kähler cone}
\label{sec:bkahl}
 The positive ray 
$$\Kr=\{\gamma \in H^2_{orb}(\Sbar,\RR), \; \gamma\cdot [\Sbar]>0\}
$$
is by definition the entire  Kähler cone of the orbifold Riemann
surface $\Sbar$.  
For practical reasons, the image of $\Kr$ under the canonical injective maps 
$$H^2_{orb}(\Sbar,\RR)\stackrel {\pi_{\Sbar}^*}\hkto
H^2_{orb}(\cXbar,\RR) \stackrel{\pi_{\cXbar}^*}\hkto H^2(\cXhat,\RR)$$
shall be denoted by $\Kr$ as well. So, depending on the context,
$\Kr$ will represent a ray in $H^2_{orb}(\Sbar,\RR)$,
$H^2_{orb}(\Xbar,\RR)$ or $H^2(\cXhat,\RR)$.
\begin{rmk}
  The notation $H^k_{orb}$ stands for the orbifold
  De Rham cohomology. Here, we emphasize the fact that we like to represent
cohomology  classes by closed differential forms which are smooth in the
  orbifold sense. However, the notation is unimportant, as there is a canonical
  isomorphism $H^k_{orb}(\cXbar,\RR)\simeq H^k(\cXbar,\RR)$ with the
  standard singular cohomology. The proof of this property boils down to the
  fact that there is a local Poincaré lemma in the context of orbifold
  De Rham cohomology.
\end{rmk}

Let $\Kc(\cXbar)$ and $\Kc(\cXhat)$ be the Kähler cones of the orbifold
$\cXbar$ and of $\cXhat$. It is well known that $\cXhat$ and $\cXbar$
are of Kähler type. These cones are therefore nonempty. The following
lemma is more precise, and concerns the Kähler classes
that will be relevant for our results:
\begin{lemma}
\label{lemma:ray} The ray $\Kr$ is
  contained in the closure of $\Kc(\cXhat)\cap H^2(\cXhat,\QQ)$ in $H^2(\cXhat,\RR)$.
In other words, for every open cone $U\subset
H^2(\cXhat,\RR)$ such that $\Kr\subset U$, the cone
$\Kc(\cXhat)\cap H^2(\cXhat,\QQ)\cap U$ is nonempty.
\end{lemma}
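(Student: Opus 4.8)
The plan is to exhibit, for any neighbourhood $U$ of the ray $\Kr$ in $H^2(\cXhat,\RR)$, an explicit rational Kähler class on $\cXhat$ lying in $U$, by starting from a Kähler class on $\Sbar$ and correcting it with a small ample contribution coming from the blowup. First I would recall that any point of $\Kr$, viewed in $H^2(\cXhat,\RR)$, is of the form $\pi_{\cXbar}^*\pi_{\Sbar}^*\gamma$ with $\gamma$ a Kähler class on the orbifold Riemann surface $\Sbar$; since $H^2_{orb}(\Sbar,\RR)$ is one–dimensional and generated by a rational class (the orbifold fundamental class is rational, or equivalently a multiple of an ample $\QQ$–divisor, because $\Sbar$ is a projective orbifold curve), such $\gamma$ can be chosen rational. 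Hence $\beta_0 := \pi_{\cXbar}^*\pi_{\Sbar}^*\gamma \in H^2(\cXhat,\QQ)$ and $\beta_0$ lies on $\Kr$, so $\beta_0 \in U$ as soon as we take $\gamma$ appropriately normalised.

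The class $\beta_0$ is only nef, not Kähler, on $\cXhat$: it is the pullback of a Kähler class under the birational morphism $\pi := \pi_{\cX}\circ\pi_{\cXbar}$ (equivalently, it is trivial on all the exceptional curves $E_0, E^\pm_j$ of the iterated blowup and on the fibre directions that got contracted). To move it into the open Kähler cone I would add a small multiple of an honest Kähler class. Concretely, fix any rational Kähler class $\omega_{\cXhat} \in \Kc(\cXhat)\cap H^2(\cXhat,\QQ)$ — this exists because $\cXhat$ is a smooth projective surface (it is an iterated blowup of a projective ruled surface), so its Kähler cone is the ample cone and is rationally generated. Then for every rational $\epsilon>0$ the class
\begin{equation*}
\beta_\epsilon \;=\; \beta_0 + \epsilon\,\omega_{\cXhat}
\end{equation*}
is rational, and it is Kähler: $\beta_0$ is nef, $\omega_{\cXhat}$ is Kähler, and the sum of a nef class and a Kähler class is Kähler (equivalently, by Kleiman's criterion, $\beta_\epsilon$ is positive on the closed cone of curves minus the origin). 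As $\epsilon\to 0^+$ through rational values, $\beta_\epsilon \to \beta_0 \in \Kr$, so for $\epsilon$ small enough $\beta_\epsilon \in U$; thus $\Kc(\cXhat)\cap H^2(\cXhat,\QQ)\cap U \neq \emptyset$. Running this for every point of $\Kr$ and every $U$ gives exactly the claim that $\Kr$ lies in the closure of $\Kc(\cXhat)\cap H^2(\cXhat,\QQ)$.

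The only genuine point requiring care — and the step I expect to be the main obstacle — is the claim that $\beta_0 = \pi^*\gamma$ is nef on $\cXhat$ with $\gamma$ \emph{rational} and Kähler on $\Sbar$: one must check that pulling back through the orbifold resolution $\pi_{\cXbar}$ and the ruling $\pi_{\Sbar}$ behaves well, i.e. that $\pi_{\Sbar}^*\gamma$ is nef on the orbifold $\cXbar$ (clear, being a pullback of an ample class under a fibration) and that $\pi_{\cXbar}^*$ of a nef class is nef on the smooth surface $\cXhat$ (standard for the pullback of a nef class under a projective birational morphism, applied here to the minimal resolution of the orbifold/quotient singularities of $\cXbar$). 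Rationality of $\gamma$ is immediate from $\dim_\RR H^2_{orb}(\Sbar,\RR)=1$ together with the fact that $\cXbar\to\Sbar$ and the whole configuration is algebraic, so the generator can be taken in $H^2_{orb}(\Sbar,\QQ)$; in fact any rational ample $\QQ$–divisor on the projective curve underlying $\Sbar$ does the job. With these observations in place the rest is the elementary convex-geometry argument above.
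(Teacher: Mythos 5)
Your proposal is correct, but it takes a genuinely different route from the paper's. The paper deduces Lemma \ref{lemma:ray} from Lemma \ref{lemma:kahcldeg}, i.e.\ from a differential-geometric construction: Kodaira's argument for stability of Kähler classes under blowup is adapted to the orbifold setting by gluing the orbifold Kähler form associated to $\Omega^{orb}_c$ to small copies of the Calderbank--Singer ALE scalar-flat metrics, which shows that every class of the form \eqref{eq:smoothkclass} with the areas $\Omega\cdot E^\pm_j$ small and positive is Kähler; rationality is then obtained simply by choosing $c$ and these areas rational. You argue instead purely algebro-geometrically: the pullback $\beta_0$ of a rational positive class on $\Sbar$ is a positive rational multiple of the fibre class of the composed fibration $\cXhat\to\Sigma$, hence nef (this is the cleanest way to dispose of the orbifold-pullback issue you flag), $\cXhat$ is a smooth projective surface, and $\beta_0+\epsilon\,\omega_{\cXhat}$ with $\omega_{\cXhat}$ rational ample and $\epsilon\in\QQ_{>0}$ is ample by Kleiman's criterion (nef plus ample is ample), hence Kähler; letting $\epsilon\to 0$ through rationals, plus density of rational multiples along the ray, gives both formulations of the lemma. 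Since you only ever need the positivity statement for rational classes, the delicate real-class assertion ``nef $+$ Kähler $=$ Kähler'' (Buchdahl--Lamari) is never actually required, and the loose phrase about the Kähler cone being the ample cone is harmless here because $h^{2,0}(\cXhat)=0$ — all you need is one integral ample class, which projectivity of $\cXhat$ provides. What each approach buys: yours is more elementary and self-contained, with no gluing theorem and no orbifold metrics; the paper's route yields strictly more than the statement, namely the concrete description (Lemma \ref{lemma:kahcldeg}) of which classes near $\Kr$ are Kähler — all those with small positive areas on the $E^\pm_j$ — and this quantitative version is what gets reused later (e.g.\ in Proposition \ref{prop:tc} and in the Futaki computations), whereas your argument produces some rational Kähler classes in any given cone $U$ about $\Kr$, which is exactly what Lemma \ref{lemma:ray} asserts but no more.
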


The fiberwise hyperplane section of $\cXbar \to \Sbar$ defines a holomorphic orbifold line
bundle  denoted $\cO_{\cXbar}(1)\to \Xbar$ (the construction is
completely similar to the case of smooth geometrically ruled surfaces).

Like in the smooth case, one can construct a Hermitian metric
 $h$ on
$\cO_{\cXbar}(1)\to \cXbar$ with curvature $F_h$, such the closed
$(1,1)$ form $\omega_h= \frac i{2\pi} F_h$ restricted to any fiber of
$\cXbar\to \Sbar$ is a Kähler form. We may even assume that the
restriction of $\omega_h$ to the fibers agrees with the Fubini-Study
metric on $\CP^1$. Notice that with our conventions
$$
[\omega_h]=\orbic (\cO_{\cXbar}(1)),
$$
where $\orbic \in H^2_{orb}(\cXbar,\RR)$ denotes the first (orbifold) Chern class of an orbifold
complex line bundle. Modulo the isomorphism $
H^2_{orb}(\cXbar,\RR)\simeq H^2(\cXbar,\RR)$ between DeRham orbifold
cohomology and singular cohomology, one can show that orbifold Chern
classes are  rational. 

Let $\Omega_{\Sbar}\in \Kr$ be a Kähler class on $\Sbar$
represented by a Kähler metric with Kähler form $\omega_{\Sbar}$.
We shall assume that $\Omega_{\Sbar}$ is integral, which is always
possible, up to multiplication by a positive constant.
It is easy to check that  for every constant $c>0$ sufficiently small,
the closed $(1,1)$-form
\begin{equation}
  \label{eq:omc}
\omega^{orb}_c=  \pi_{\Sbar}^*\;\omega_{\Sbar} +c\;\omega_h   
\end{equation}
is definite positive on $\cXbar$. Thus $\omega^{orb}_c$ defines a  Kähler orbifold metric on
$\cXbar$ with
Kähler class 
\begin{equation}
  \label{eq:orbkclass}
\Omega^{orb}_c=\bar\pi^*\Omega_{\Sbar} +
c\cdot \orbic (\cO_{\cXbar}(1)).  
\end{equation}

Assuming again that the parabolic structure has exactly one marked
point,  we consider $(1,1)$-cohomology class on $\cXhat$ given
by
\begin{equation}
  \label{eq:smoothkclass}
\Omega = \pi^*_{\cXbar}\Omega_c^{orb}+ \sum_{j=1}^k c_j^-[E_j^-] + \sum_{j=1}^l c_j^+[E_j^+],  
\end{equation}
where  $c^\pm_j \in \RR$ and $[E_j^\pm]\in H^2(\cXhat,\ZZ)$ denotes
the Poincaré dual of $E_j^\pm\in H_2(\cXhat,\ZZ)$.
Here, the constants $c^\pm_j$ are uniquely determined by the values of
$\Omega\cdot E^\pm_j$, since the intersection matrix of the
$E^\pm_j$-curves is invertible. 
Then we have the following result:
\begin{lemma}
\label{lemma:kahcldeg}
Given $c>0$, there exists 
$\epsilon >0$,  such that every cohomology class $\Omega$ given by
\eqref{eq:smoothkclass} 
and satisfying $0<\Omega\cdot E^\pm_j<\epsilon$  is a Kähler class.
\end{lemma}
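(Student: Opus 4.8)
The plan is to check that $\Omega$ satisfies the Nakai--Moishezon criterion for Kähler classes on a compact surface. Recall that $\pi:=\pi_{\cXbar}\colon\cXhat\to\cXbar$ is the minimal resolution, that the curves $E^\pm_j$ are exactly the $\pi$-exceptional ones, that they are contracted to the (at most two) cyclic quotient singular points of $\cXbar$, and that their intersection matrix $N=(E^\pm_i\cdot E^\pm_j)$ is negative definite. Fix a Kähler form $\omega^{orb}$ on $\cXbar$ representing $\Omega^{orb}_c$ --- such a form exists by \eqref{eq:omc} --- and set $\omega_0:=\pi^*\omega^{orb}$, a closed semipositive $(1,1)$-form on $\cXhat$ which is strictly positive outside $\bigcup_j E^\pm_j$; note that $\Omega^{orb}_c$ is in particular ample on $\cXbar$. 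Since $\cXhat$ is a compact Kähler surface with $p_g=0$, we have $H^2(\cXhat,\RR)=H^{1,1}(\cXhat,\RR)$, so by the Nakai--Moishezon criterion for Kähler surfaces (Buchdahl, Lamari) it is enough to prove, for $\epsilon$ small enough --- the bound depending only on $\omega^{orb}$, hence only on $c$, and on the fixed combinatorial type --- that $\Omega^2>0$, that $\Omega$ lies in the connected component of $\{\beta\in H^{1,1}(\cXhat,\RR):\beta^2>0\}$ containing the Kähler classes, and that $\Omega\cdot C>0$ for every irreducible curve $C\subset\cXhat$.

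Write $\Omega=\pi^*\Omega^{orb}_c+D$ with $D=\sum_j c^-_j[E^-_j]+\sum_j c^+_j[E^+_j]$. Since $\pi^*\Omega^{orb}_c\cdot E^\pm_j=\Omega^{orb}_c\cdot\pi_*E^\pm_j=0$, we get $D\cdot E^\pm_j=\Omega\cdot E^\pm_j\in(0,\epsilon)$, so by invertibility of $N$ the coefficients satisfy $|c^\pm_j|\le C_0\,\epsilon$ with $C_0$ fixed by $N$; in particular $D=O(\epsilon)$ and the $c^\pm_j$ are negative. Hence $\Omega^2=(\Omega^{orb}_c)^2+2\,\pi^*\Omega^{orb}_c\cdot D+D^2=(\Omega^{orb}_c)^2+O(\epsilon^2)$ is positive for $\epsilon$ small, because $(\Omega^{orb}_c)^2>0$; and for any fixed Kähler class $\kappa_0$ on $\cXhat$, $\Omega\cdot\kappa_0=\pi^*\Omega^{orb}_c\cdot\kappa_0+O(\epsilon)$ with $\pi^*\Omega^{orb}_c\cdot\kappa_0=\int_{\cXhat}\omega_0\wedge\kappa_0>0$, so $\Omega\cdot\kappa_0>0$ for $\epsilon$ small and $\Omega$ sits in the Kähler component of the positive cone. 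With the first two conditions settled, only the positivity of $\Omega\cdot C$ remains. If $C=E^\pm_j$, then $\Omega\cdot C=\Omega\cdot E^\pm_j\in(0,\epsilon)>0$; otherwise $C$ is not $\pi$-exceptional, so $C':=\pi_*C$ is a nonzero effective curve on $\cXbar$, $C$ is its strict transform, and $C\cdot E^\pm_j\ge0$ for all $j$, whence
\[
\Omega\cdot C=\Omega^{orb}_c\cdot C'+\sum_j c^\pm_j(E^\pm_j\cdot C)\ \ge\ \Omega^{orb}_c\cdot C'-C_0\,\epsilon\sum_j(E^\pm_j\cdot C).
\]

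The one substantive point is a uniform estimate $\sum_j(E^\pm_j\cdot C)\le\Lambda\,(\Omega^{orb}_c\cdot C')$, valid for all irreducible non-exceptional $C$, with $\Lambda$ depending only on the configuration and on $\omega^{orb}$. Granting it, $\Omega\cdot C\ge(1-C_0\Lambda\epsilon)\,\Omega^{orb}_c\cdot C'>0$ as soon as $\epsilon<(C_0\Lambda)^{-1}$, because $\Omega^{orb}_c\cdot C'>0$ by ampleness, and the lemma follows by taking $\epsilon$ to be the minimum of the finitely many thresholds above. To prove the estimate, pick an ample $\QQ$-divisor $A$ on the projective surface $\cXhat$; since $\cXbar$ is $\QQ$-factorial, $\pi_*A$ is $\QQ$-Cartier and the negativity lemma gives $A=\pi^*(\pi_*A)-\sum_j z_j E^\pm_j$ with $(z_j)=-N^{-1}(A\cdot E^\pm_\bullet)>0$ (the entries of $-N^{-1}$ are nonnegative with no vanishing row, and $A\cdot E^\pm_j>0$). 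Intersecting with $C$, using $A\cdot C>0$, $E^\pm_j\cdot C\ge0$, and the projection formula $\pi^*(\pi_*A)\cdot C=\pi_*A\cdot C'$, we get $z_{\min}\sum_j(E^\pm_j\cdot C)\le\pi_*A\cdot C'$; finally $\pi_*A$ is a fixed class, so $\Lambda_0\,\Omega^{orb}_c-\pi_*A$ is ample for $\Lambda_0$ large, hence $\pi_*A\cdot C'\le\Lambda_0\,\Omega^{orb}_c\cdot C'$, which gives the estimate with $\Lambda=z_{\min}^{-1}\Lambda_0$.

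A couple of remarks. The upper bound on $\Omega\cdot E^\pm_j$ is genuinely needed: for larger values the $c^\pm_j$ become large and negative, and then $\Omega$ acquires negative intersection with the $(-1)$-curve $E_0$, so $\Omega$ is not even nef. Also, although the argument above leans on the (deep) Nakai--Moishezon criterion for Kähler surfaces, the lemma admits a more self-contained and constructive proof: glue $\omega_0$ --- which is positive away from the exceptional curves --- to local Kähler forms near the exceptional chains realizing the prescribed small periods $\Omega\cdot E^\pm_j$, in the spirit of the ALE gluing constructions underlying \cite{RolSin05}. In that approach the uniform estimate above is replaced by the analytic control of the gluing, and that is where I would expect the bulk of the work to lie.
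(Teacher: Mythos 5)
Your argument is correct in substance, but it takes a genuinely different route from the paper. The paper's proof is differential-geometric: it invokes Kodaira's stability of Kähler manifolds under blowup, adapted to the orbifold setting, and more concretely follows the Arezzo--Pacard-type gluing of \cite{ArePac06}, gluing $\pi_{\cXbar}^*\omega^{orb}_c$ to small copies of the Calderbank--Singer scalar-flat ALE Kähler metrics \cite{CalSin04} along the exceptional chains; this directly produces Kähler forms realizing every class with small areas $\Omega\cdot E^\pm_j$, and it is exactly the alternative you sketch in your closing remark --- the natural choice here, since the same gluing machinery is reused for the extremal/CSCK statements (Theorem~\ref{theo:apunif}). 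Your route is instead algebro-geometric: Nakai--Moishezon for real $(1,1)$-classes (Buchdahl--Lamari, or Campana--Peternell since $\cXhat$ is projective with $h^{2,0}=0$), with the only substantive input being the uniform bound $\sum_j E^\pm_j\cdot C\le \Lambda\,\Omega^{orb}_c\cdot\pi_*C$ obtained from the negativity lemma applied to an ample $\QQ$-divisor. This buys a purely cohomological, analysis-free proof and makes the size of $\epsilon$ explicit, at the cost of not producing metrics (which the paper needs anyway later). One step you state too glibly: ``$\Lambda_0\Omega^{orb}_c-\pi_*A$ is ample for $\Lambda_0$ large'' --- $\Omega^{orb}_c$ need not be rational and curve-by-curve positivity alone does not give a uniform $\Lambda_0$; the clean fix is to represent a Cartier multiple of $\pi_*A$ by a $(1,1)$-form $\eta$ smooth in the orbifold sense, choose $\Lambda_0$ with $\Lambda_0\omega^{orb}_c-\eta>0$ pointwise by compactness, and integrate over each irreducible curve. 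With that repair your proof is complete.
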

\begin{proof}
Kodaira that showed
that (smooth) Kähler manifolds are stable under blowup.
Kodaira's argument can be adapted to the orbifold setting,
 and the proof is  nearly identical.
  Following \cite{ArePac06} and using the scalar-flat ALE metrics of
Calderbank-Singer \cite{CalSin04}, one can construct a Kähler metric
$\omega$ on $\cXhat$ by gluing $\pi^*_{\Xbar}\omega_c^\orb$ and a
small copy of one to the Calderbank-Singer metrics.
Every Kähler class such that the areas $\Omega\cdot E^\pm_j$ are
sufficiently small is obtained in this way.
\end{proof}

\begin{proof}[Proof of Lemma \ref{lemma:ray}]
An element of $\Kr$ is represented by a Kähler class
$\Omega_{\Sbar}$. 
The constant $c$ and $\Omega\cdot [E^\pm_j]$ that appear in the above
discussion can be chosen to be rational and we may assume that we have
a Kähler class 
$\Omega\in H^2(\cXhat,\QQ)$. 
It is now obvious that $\Omega$ is arbitrarily close to the pullback
of $\Omega_{\Sbar}$  for $c$ and $\Omega\cdot [E^\pm_j]$ sufficiently
small. The result follows for the case where the parabolic structure
has exactly one point. The general case is an obvious generalization.
\end{proof}

\begin{dfn}
\label{dfn:basic}
If  there exists an open cone $U$ in $H^2(\cXhat,\RR)$, containing the
ray $\Kr$, with the property that any Kähler class in
$U\cap \Kc(\cXhat)$ can be represented by a CSCK  (\resp extremal) metric,
we say that the iterated blowup $\cXhat$ is
\emph{basically CSCK} (\resp \emph{extremal}).

If  there exists an open cone $U$ in $H^2(\cXhat,\RR)$, containing the
ray $\Kr$, with the property that any rational Kähler class in
$U\cap \Kc(\cXhat)$ is K-stable,
we say that the iterated blowup $\Xhat$ is \emph{basically K-stable}.
\end{dfn}
More generally, any property $\mathscr{P}(\Omega)$ depending on the
choice of a cohomology 
class $\Omega\in H^2(\cXhat,\RR)$ is said to be \emph{basically
  satisfied}, if it
holds for every $\Omega$ contained in  a sufficiently small cone about
the ray $\Kr$. In other words, if $\mathscr{P}$ holds for every
$\Omega$ sufficiently 
close to  a \emph{basic class}. This explains my choice of
terminology; peharps there are better choices and I am open to
suggestions. 

\begin{example}
There is no general existence theory for extremal metrics. An
exciting approach  for fibrations in
various contexts was adopted by  Hong, Fine and
Brönnle \cite{Hon99,Fin04,Bro13}. Their idea is to construct
approximate extremal metrics by making the base of the fibration huge, which is
sometimes refered to as taking \emph{an adiabatic limit}. Then the
extremal metric 
is obtained by perturbation theory. The results aforementioned show
that the fibration  under consideration is \emph{basically extremal}
in the sense of Definition \ref{dfn:basic}. 
\end{example}

\begin{rmk}
The condition of $K$-stability may be defined for varieties polarized
by a rational Kähler class.
From a
  more down to earth point of view, a rational class becomes an
  integral Kähler class $\Omega$
  after multiplication by a suitable positive integer.
 The class $\Omega$ defines an ample holomorphic line
  bundle $L_\Omega\to \cXhat$ with $c_1(L_\Omega)=\Omega$ and the
  condition of $K$-stability for the original rational polarization is
  equivalent to the usual notion of $K$-stability for
  $(\cXhat,L_{\Omega})$ (cf. \S\ref{sec:tc} for more details).
\end{rmk}

\subsection{Comments and proof of  Theorem \ref{theo:A}}
Our main result is an attempt to solve the conjecture made 
in \cite{RolSin05}. Loosely
speaking, we expect a correspondence 
between the two classes of objects represented in the following
diagram:

\vspace{10pt}

\begin{center}
\begin{tabular}{lcr}
 \fbox{\parbox{1.8in}{Parabolically stable ruled surfaces $\cX\to \Sigma$}} 
& $\sim$ &\fbox{\parbox{2in}{CSCK metrics on the corresponding
     iterated blowup   $\cXhat$
}}
\end{tabular}
\end{center}
Theorem \ref{theo:A} shows that the answer to the conjecture is positive,
 under some mild assumptions, provided we consider only certain Kähler
classes on $\cXhat$ close to the boundary ray $\Kr$ of the Kähler cone. 

We should point out that when the parabolic structure is empty,
i.e. when $\cXhat=\cX = \PP(E)$ is a geometrically ruled surface, the problem is completely
understood~\cite{ApoTon06}. In this case, the result of Apostolov and
T{\o}nnesen-Friedman says that
$E\to\Sigma$ is a polystable holomorphic bundle if and only if
$\PP(E)$ is CSCK, for \emph{any} Kähler class.

Notice that the conjecture deals with highly non generic ruled
surfaces. Indeed, the complex structures of  iterated blowups $\cXhat\to\cX$ encoded by parabolic
structures are very special.
It is tempting to believe that our result could be used as
the very first step 
toward a proof of the general Donaldson-Tian-Yau conjecture for ruled
surfaces. Here,  some kind of deformation theory and continuity method
is needed. Important progress shall
 be made for completing this program, especially for dealing with  the difficult compactness issue of the
relevant moduli spaces.

\begin{proof}[Proof of Theorem \ref{theo:A}]
 $(1)\Rightarrow (2)$ is an 
immediate consequence of Stoppa's result~\cite{Sto09}. 

 $(3)\Rightarrow (1)$   is essentially contained in the joint
work of the author with Michael Singer~\cite{RolSin05,RolSin09,RolSin09-2} plus some slight improvements
explained at~\S\ref{sec:glue}.  More precisely the result follows from
point $(1)$  in Theorem~\ref{theo:glue}.

 $(2)\Rightarrow (3)$ was the missing piece of the
puzzle that completes the full picture. We shall prove that if $\cX\to\Sigma$ is not parabolically
stable, one can construct  destabilizing test configuration
as proved in Corollary~\ref{cor:key}.
This requires a delicate computation for the Futaki invariant at~\S\ref{sec:sfut}.
\end{proof}

\subsection{Acknowledgements}
The author would like to thank G\'abor
Sz\'ekelyhidi for some stimulating discussions during the fall 2012, at  the MACK5
conference in Rome.

\section{Extremal ruled surfaces and gluing theory}
\label{sec:glue}
\subsection{Application of the Mehta-Seshadri theorem}
Let $\cX\to\Sigma$ be  a parabolic geometrically ruled surface with
rational weights. If $\cX\to\Sigma$ is parabolically polystable, it is
a flat $\CP^1$ bundle on the complement of the fibers
$\pi_\Sigma^{-1}(y_j)$ by Mehta-Seshadri
theorem \cite{MehSes80} and the monodromy of the flat
connection is given by a morphism
$\rho:\pi_1(\Sigma \setminus \{y_j\})\to\SU_2/\ZZ_2$. In
addition, if $l_j$ is the 
homotopy class of a loop in $\Sigma \setminus \{y_j\}$ winding once
around $y_j$, then $\rho(l_j)$ is given by the matrix
\begin{equation*}
\left (  \begin{array}{cc}
    e^{i\pi\alpha_j} & 0 \\
0&     e^{-i\pi\alpha_j}
  \end{array}
\right )
\end{equation*}
up to conjugation. In particular $\rho(l_j)$ has order $q_j$ and the
morphism descends to 
$$\rho:\pi_1^{orb}(\Sbar)\to \SU_2/\ZZ_2,$$
 as the orbifold fundamental group $\pi_1^{orb}(\Sbar)$ is just deduced from $\pi_1(\Sigma\setminus\{y_j\})$ by adding the relation~$l_j^{q_j}=1$.

Now, the orbifold Riemann surface $\Sbar$ admits an orbifold metric
$g_{\Sbar}$ of constant curvature 
in its conformal class, unless it is a ``bad'' orbifold in the sense
of Thurston. That is if $\Sbar$ is a \emph{teardrop} or a
\emph{football}\footnote{Using  the more politically correct
  term \emph{northern-American-football} may be a safer
  option. 
} 
with two
singularities of distinct 
orders. 

\begin{rmk}
\label{rmk:good}
In fact $\Sbar$ cannot be bad if $\cX\to\Sigma$ is
polystable. Indeed, assume that $\Sigma\simeq \CP^1$ and that the
parabolic structure has exactly one marked point with weight
$\alpha=p/q$. Let $l$ be the homotopy class of a loop winding once
around the parabolic point of $\CP^1$. By the Mehta-Seshadri theorem,
$\rho(l)$ has order $q$. But $l$ is trivial, since $\CP^1$ with one
puncture is contractible. It follows that $q=1$ which is impossible.
A similar argument shows that $\Sbar$ cannot be a football with two
singularities of distinct orders.  
\end{rmk}

Since the monodromy acts isometrically on 
$\CP^1$ endowed with the Fubini-Study metric,
the twisted product $\Sbar\times_\rho \CP^1$  carries a
local product deduced $g_{\Sbar}$ and
$g_{FS}$. Adjusting the metrics on each factor by a constant. In
conclusion of the above discussion and Remark \ref{rmk:good}, we get
the following lemma:
\begin{lemma}
\label{lemma:csckms}
  If $\cX\to\Sigma$ is parabolically polystable, $\Sbar$ and $\cXbar$
admit orbifold CSCK metrics in every Kähler classes.
\end{lemma}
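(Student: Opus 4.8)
The plan is to upgrade the local-product construction sketched just above the statement into a genuine global orbifold CSCK metric, and to handle the two factors separately. First I would treat the base $\Sbar$: by the discussion preceding the lemma, whenever $\cX\to\Sigma$ is parabolically polystable the orbifold $\Sbar$ is \emph{not} a bad orbifold in the sense of Thurston (this is exactly Remark \ref{rmk:good}), so $\Sbar$ carries an orbifold metric of constant curvature in its conformal class. Rescaling this metric by a positive constant realizes any prescribed class in the ray $\Kr$ (which by definition is the whole Kähler cone of $\Sbar$, a half-line); since a constant-curvature metric on a Riemann surface orbifold is automatically Kähler and CSCK, this settles the claim for $\Sbar$ in every Kähler class.

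Next I would build the CSCK metric on the ruled orbifold $\cXbar\to\Sbar$. The Mehta-Seshadri description gives a monodromy representation $\rho:\pi_1^{orb}(\Sbar)\to \SU_2/\ZZ_2$, and since $\SU_2/\ZZ_2$ acts by isometries on $(\CP^1,g_{FS})$, the associated bundle $\cXbar\simeq \widetilde{\Sbar}\times_\rho\CP^1$ inherits a \emph{local product} metric from $g_{\Sbar}$ (of constant curvature) on the base and $g_{FS}$ on the fiber. A local Riemannian product of two Kähler-Einstein (in complex dimension one: constant curvature) metrics is Kähler and has constant scalar curvature, equal to the sum of the two fiberwise scalar curvatures. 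The Kähler class of this product metric is of the form $\bar\pi^*\Omega_{\Sbar}+c\cdot\orbic(\cO_{\cXbar}(1))$ with $c>0$ determined by the relative sizes of the two factors; rescaling $g_{\Sbar}$ and $g_{FS}$ independently by positive constants lets $\Omega_{\Sbar}$ range over all of $\Kr$ and $c$ over all of $(0,\infty)$, so every class of the shape \eqref{eq:orbkclass} is represented by such a CSCK metric. Finally, since the fibration $\cXbar\to\Sbar$ is a $\CP^1$-bundle, $H^2_{orb}(\cXbar,\RR)$ is spanned by $\bar\pi^*H^2_{orb}(\Sbar,\RR)$ together with $\orbic(\cO_{\cXbar}(1))$, so \emph{every} Kähler class on $\cXbar$ has the form \eqref{eq:orbkclass}; this exhausts the Kähler cone and completes the proof.

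The main obstacle is entirely at the level of bookkeeping rather than analysis: one must be careful that the constant-curvature orbifold metric on $\Sbar$ genuinely exists, i.e. that the polystability hypothesis does rule out the teardrop and the uneven football — this is the content of Remark \ref{rmk:good}, and I would simply invoke it. A second minor point to check is that when $\cX\to\Sigma$ is strictly polystable (not stable), the sections $S_\pm$ of vanishing parabolic slope correspond to a reducible $\rho$, but this does not affect the argument: a reducible $\rho$ still acts by isometries on $(\CP^1,g_{FS})$, so the local product construction goes through verbatim. Thus no delicate perturbation or gluing is needed here — all the hard gluing analysis is deferred to the construction of CSCK metrics on the \emph{smooth} iterated blowup $\cXhat$ in the later sections, whereas the orbifold $\cXbar$ itself carries explicit homogeneous-fiber metrics.
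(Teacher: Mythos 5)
Your proposal is correct and follows essentially the same route as the paper: the paper's own justification is exactly the discussion preceding the lemma, namely the Mehta--Seshadri flat structure with monodromy $\rho:\pi_1^{orb}(\Sbar)\to\SU_2/\ZZ_2$, the constant-curvature orbifold metric on $\Sbar$ (bad orbifolds being excluded by Remark~\ref{rmk:good}), and the local product metric on $\Sbar\times_\rho\CP^1$ with the two factors rescaled to realize the Kähler classes. Your additional remark that classes of the form \eqref{eq:orbkclass} exhaust the Kähler cone of $\cXbar$ is the same (implicit) step the paper takes when it says ``adjusting the metrics on each factor by a constant,'' so the two arguments coincide.
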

This construction 
was the key argument used in
\cite{RolSin05,RolSin09,RolSin09-2} together with the  Arezzo-Pacard
gluing theory \cite{ArePac06}, for  producing
CSCK metrics on the desingularization $\cXhat$ of $\cXbar$. The point
is that the local resolution of isolated singularity that occur 
in $\cXbar$ admit  scalar-flat Kähler metrics deduced from the
 Calderbank-Singer ALE scalar-flat Kähler metrics 
\cite{CalSin04}. 

The gluing theorem that we shall use can be stated as follows.
\begin{theo}
\label{theo:apunif}
  Let $\cXbar$ be a CSCK orbifold surface with Kähler class $\Omega^{orb}$
  and isolated singularities $z_i$ modelled on $\CC^2/\Gamma_i$, where
  $\Gamma_i$ is a finite cyclic subgroup of $U(2)$. Let $\pi_{\cXbar}:\cXhat\to
  \cXbar$ be the minimal resolution.
Then  $\cXhat$  admits extremal metrics in every Kähler class sufficiently
close to~$\pi_{\cXbar} ^*\Omega^{orb}$. 
\end{theo}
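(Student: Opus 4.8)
The plan is to follow the gluing strategy of Arezzo--Pacard \cite{ArePac06}, adapted to the orbifold setting, using the Calderbank--Singer ALE metrics as local models near the singularities. I first reduce to the case of a single singular point $z\in\cXbar$ with model $\CC^2/\Gamma$, $\Gamma\subset U(2)$ finite cyclic; the general case follows by performing the construction simultaneously and independently near each $z_i$, since the singularities are isolated. The minimal resolution of $\CC^2/\Gamma$ is a toric ALE surface $Y\to\CC^2/\Gamma$ whose exceptional set is a chain of rational curves, and by Calderbank--Singer \cite{CalSin04} it carries, in every Kähler class, a scalar-flat Kähler ALE metric $\eta$ asymptotic at infinity to the flat orbifold metric on $\CC^2/\Gamma$; being scalar-flat, $\eta$ is a fortiori extremal. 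On the compact side, $\cXbar$ carries by hypothesis an orbifold CSCK metric $\omega^{orb}$ in the class $\Omega^{orb}$, which near $z$ may be taken to osculate the flat model to high order. The approximate extremal metric $\omega_\epsilon$ on $\cXhat$ is obtained by excising a small ball around $z$, inserting a copy of $(Y,\eta)$ rescaled by a factor $\epsilon$, and gluing the two pieces across an annular neck by cutting off Kähler potentials; this can be arranged so that $[\omega_\epsilon]=\pi_{\cXbar}^*\Omega^{orb}+O(\epsilon^2)$ and so that, as $\epsilon$ ranges over small positive values and the Kähler class of $\eta$ ranges over $\Kc(Y)$, the classes $[\omega_\epsilon]$ sweep out a full open cone about $\pi_{\cXbar}^*\Omega^{orb}$ in $\Kc(\cXhat)$: concretely the parameters are in bijection with the (arbitrarily small) areas $[\omega_\epsilon]\cdot E_j$ of the exceptional curves, by the same linear-algebra argument used in \eqref{eq:smoothkclass}.

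Next I set up the nonlinear problem. Writing a nearby Kähler form as $\omega_\epsilon+i\partial\bar\partial\varphi$, the extremal equation becomes a fourth-order elliptic PDE $\mathcal{L}_\epsilon\varphi+Q_\epsilon(\varphi)=f_\epsilon$, where $\mathcal{L}_\epsilon$ is the linearization at $\omega_\epsilon$ (the Lichnerowicz-type operator whose kernel is the space of holomorphy potentials), $Q_\epsilon$ collects the higher-order terms, and $f_\epsilon$ is the extremal error $\scal(\omega_\epsilon)$ minus its projection onto holomorphy potentials; by construction $f_\epsilon$ is supported in the neck and is $O(\epsilon^N)$ in an appropriate weighted Hölder norm $\holder$, with $N$ as large as desired once the order of osculation in the gluing is chosen large enough. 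The analysis is carried out in weighted Hölder spaces adapted to the neck, the weight $\delta$ lying in the admissible range dictated by the indicial roots of the Euclidean Laplacian on $\CC^2\setminus\{0\}$ and on $\CC^2/\Gamma$, as well as by the mapping properties of the relevant operator on $(Y,\eta)$ and on $\cXbar\setminus\{z\}$. The key linear input is that $\mathcal{L}_\epsilon$, modulo a finite-dimensional obstruction space spanned by the restrictions of holomorphy potentials of $\cXbar$ together with the decaying holomorphy potentials of $(Y,\eta)$, admits a right inverse bounded uniformly in $\epsilon$; this is obtained by patching a right inverse for $\mathcal{L}_{\omega^{orb}}$ on the orbifold — whose cokernel is exactly the holomorphy potentials, by the orbifold Matsushima--Lichnerowicz decomposition — with a right inverse for the linearized scalar-flat equation on $(Y,\eta)$.

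The metric is then produced by a fixed-point argument: if $G_\epsilon$ denotes the uniform right inverse, the map $\varphi\mapsto G_\epsilon\bigl(f_\epsilon-Q_\epsilon(\varphi)\bigr)$ is a contraction of a small ball of the weighted space for $\epsilon$ small, yielding $\varphi_\epsilon$ with $\omega_\epsilon+i\partial\bar\partial\varphi_\epsilon$ solving the extremal equation modulo the obstruction space. The last and hardest step is to kill that finite-dimensional obstruction. Because we only aim for extremal — not CSCK — metrics, the obstruction components coming from holomorphy potentials of $\cXbar$ are harmless: the extremal equation is precisely ``CSCK modulo the reduced automorphism algebra,'' so these do not obstruct. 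What remains is to absorb the obstructions localized on the ALE piece, governed by the residual torus symmetry of the toric resolution $Y$; these are to be cancelled by tuning the gluing parameters, i.e.\ the Kähler class of $\eta$, equivalently the areas $[\omega_\epsilon]\cdot E_j$, using that a scalar-flat Kähler metric exists on $Y$ in every Kähler class. Matching the number of such ALE obstructions with the number of exceptional curves and checking that the induced finite-dimensional balancing map has non-degenerate derivative as $\epsilon\to0$ (a transversality computation of the type in \cite{ArePac06}) completes the argument; several singular points are handled at once since the construction and obstruction analysis decouple over disjoint necks, and one takes the product of the parameter spaces. I expect this final transversality/parameter-matching step, rather than the gluing estimates themselves, to be the main obstacle.
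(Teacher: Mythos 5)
Your overall strategy is the same as the paper's: glue the Calderbank--Singer ALE scalar-flat metrics \cite{CalSin04} into the orbifold CSCK metric on $\cXbar$, solve the extremal equation by a perturbation argument in weighted H\"older spaces with a right inverse uniform in the gluing parameters, and pass from the CSCK to the extremal equation to cope with automorphisms (the paper invokes Arezzo--Pacard \cite{ArePac06}, improved so as to work uniformly over the whole finite-dimensional family of Calderbank--Singer metrics, and Tipler \cite{Tip11} for the case with holomorphic vector fields). Up to and including the fixed-point step your outline matches that scheme.

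The problem is your final ``balancing'' step, and it is a genuine gap rather than a technicality. First, the obstructions you propose to cancel are not where you place them: for the weights relevant to this gluing, the linearized operator on a scalar-flat K\"ahler ALE surface is surjective on the decaying weighted spaces, and $(Y,\eta)$ has no decaying holomorphy potentials (the potentials of the torus action grow like $|z|^2$ at infinity), so the only finite-dimensional cokernel comes from the holomorphy potentials of the compact orbifold $\cXbar$. These vanish precisely when $\cXhat$ (equivalently $\cXbar$, since fields lift and descend through the minimal resolution) has no nontrivial holomorphic vector field; when they do not vanish, the correct device is not the informal remark that extremal ``means CSCK modulo automorphisms'' but the equivariant set-up modulo a maximal compact torus of isometries, as in \cite{Tip11}, which pins down the extremal vector field and makes the projection onto holomorphy potentials well defined. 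Second, and more seriously, even if ALE-side obstructions existed you could not afford to spend the gluing parameters on killing them: the areas $[\omega_\epsilon]\cdot E_j$ are in bijection with the K\"ahler classes near $\pi_{\cXbar}^*\Omega^{orb}$, so imposing a balancing condition on these parameters would yield extremal metrics only on a positive-codimension subfamily of classes, which is strictly weaker than the statement (``every K\"ahler class sufficiently close to $\pi_{\cXbar}^*\Omega^{orb}$''). The real difficulty, which your sketch passes over, is the uniformity the paper insists on: all estimates (error of the approximate solution, bounds on the right inverse, control of the nonlinearity) must hold uniformly as the Calderbank--Singer model ranges over its entire K\"ahler cone --- all small exceptional areas in all ratios --- and not merely along a one-parameter rescaling of a single fixed ALE metric.
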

\begin{proof}
In the case where $\cXhat$ has no nontrivial holomorphic vector field,
  the result  is essentially an application of Arezzo-Pacard gluing
  theorem \cite{ArePac06} to $\Xbar$ and the Calderbank-Singer
  metrics \cite{CalSin04}. 

Arezzo-Pacard gluing theorem actually provides only  a
one parameter family of CSCK metrics, by gluing  in a copy of
 one particular Calderbank-Singer metric with scale $\epsilon$. One can improve
the Arezzo-Pacard gluing theory, working uniformly with all (a finite
dimensional smoothly varying family)
Calderbank-Singer metrics and the result follows.

In the case where $\cXhat$ admits non trivial holomorphic vector
fields, one can prove the same result working with the equation of
extremal metrics instead. It suffices to work modulo a maximal compact
torus of the isometry group of $\cXbar$. This approach has been
successfully implemented by Tipler \cite{Tip11}. Again one has to be
extra careful to get a uniform result, not only a one parameter family.
\end{proof}

We deduce the following theorem:
\begin{theo}
\label{theo:glue}
Suppose that $\cX\to \Sigma$ is a parabolic ruled
surface and  $\Omega^{orb}$ be an orbifold Kähler class on $\cXbar$. 

If $\cX\to\Sigma$ is parabolically polystable, then
 every Kähler class of $\cXhat$ sufficiently close to
$\pi_{\cXbar}^*\Omega^{orb}$ contains an extremal metric.

If $\cX\to \Sigma$ is parabolically stable then 
$\cXhat$ has no nontrivial holomorphic vector fields
and the extremal metric must be CSCK.
\end{theo}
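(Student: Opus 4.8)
The plan is to deal with the two assertions separately. For the first one I would feed Lemma~\ref{lemma:csckms} into the gluing statement of Theorem~\ref{theo:apunif}. Assuming $\cX\to\Sigma$ parabolically polystable and fixing an orbifold Kähler class $\Omega^{orb}$ on $\cXbar$, Lemma~\ref{lemma:csckms} provides an orbifold CSCK metric on $\cXbar$ with that Kähler class. I would then recall from \S\ref{secitbup} (and from the detailed analysis of \cite{RolSin05}) that the singular points of $\cXbar$ arise by contracting Hirzebruch--Jung strings of rational curves, hence are isolated cyclic quotient singularities $\CC^2/\Gamma_i$ with $\Gamma_i\subset U(2)$ finite cyclic, and that $\pi_{\cXbar}\colon\cXhat\to\cXbar$ is exactly their minimal resolution. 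Theorem~\ref{theo:apunif} then applies verbatim and yields extremal metrics on $\cXhat$ in every Kähler class sufficiently close to $\pi_{\cXbar}^*\Omega^{orb}$; this part is essentially bookkeeping, all the content sitting in Lemma~\ref{lemma:csckms} and Theorem~\ref{theo:apunif}.

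For the second assertion, assume $\cX\to\Sigma$ parabolically \emph{stable}. It is then polystable, so the first assertion already furnishes extremal metrics near $\pi_{\cXbar}^*\Omega^{orb}$; what remains is to show that $\cXhat$ carries no nontrivial holomorphic vector field, after which the CSCK property is automatic, since for an extremal metric $g$ the vector field $(\delb\,\scal_g)^\sharp$ is holomorphic and hence vanishes, forcing $\scal_g$ to be constant. To control holomorphic vector fields I would first push them down: a holomorphic vector field on $\cXhat$ lies in the Lie algebra of $\mathrm{Aut}^0(\cXhat)$, which preserves the $\pi_{\cXbar}$-exceptional configuration (a finite union of rational curves of negative self-intersection, hence rigid inside their homology classes), so it descends to a holomorphic vector field on $\cXbar$, and it suffices to rule those out. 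Next I would use the ruling: $\pi_{\Sbar}\colon\cXbar\to\Sbar$ is the unique structure of geometrically ruled orbifold surface on $\cXbar$, so $\mathrm{Aut}^0(\cXbar)$ preserves it and a holomorphic vector field $V$ on $\cXbar$ projects to a holomorphic vector field $\bar V$ on $\Sbar$, with $V$ vertical whenever $\bar V=0$.

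To rule out the vertical fields I would invoke Mehta--Seshadri: parabolic strict stability makes the monodromy $\rho\colon\pi_1^{orb}(\Sbar)\to\SU_2/\ZZ_2$ \emph{irreducible}, and the vertical holomorphic tangent bundle of $\cXbar=\Sbar\times_\rho\CP^1$ is the flat bundle associated with the adjoint (hence unitary) action of $\rho$ on $\mathfrak{sl}_2$. By the standard Bochner vanishing for holomorphic sections of a flat unitary bundle over a compact Kähler orbifold, such a section must be parallel, so a nonzero vertical holomorphic vector field would give a nonzero $\rho$-invariant element of $\mathfrak{sl}_2$, contradicting irreducibility. Hence $V$ would have to project to a nonzero $\bar V$; to exclude this I would use that $\Sbar$ is a \emph{good} orbifold (Remark~\ref{rmk:good}), and that a good $2$-orbifold with a nontrivial holomorphic vector field is either $\CP^1$, a football $\CP^1(q,q)$ with $q\geq 2$, or an elliptic curve without cone points. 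The first two are excluded because their orbifold fundamental group is cyclic or trivial, which would make $\rho$ reducible; the elliptic base with empty parabolic structure is the one remaining possibility, treated separately below, and in all other cases $\Sbar$ has finite automorphism group, so $\bar V=0$ and $V=0$, whence $\cXbar$, and therefore $\cXhat$, has no nontrivial holomorphic vector field.

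In the exceptional case $\Sbar=\cXbar=\cXhat=\PP(E)$ with $E$ a genuinely stable bundle; this situation is classical and I would dispose of it by citing \cite{ApoTon06,RolSin05}, where the relevant extremal metrics are shown to be CSCK. The main obstacle of the whole argument is precisely this vector-field analysis: tracking which automorphisms survive along $\cXhat\to\cXbar$ and along the ruling $\cXbar\to\Sbar$, and disposing cleanly of the low-complexity exceptional orbifolds; everything else is either a direct citation of Theorem~\ref{theo:apunif} and Lemma~\ref{lemma:csckms}, or a one-line consequence of the extremal equation.
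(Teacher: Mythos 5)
Your proof of the first assertion is exactly the paper's: Lemma~\ref{lemma:csckms} fed into Theorem~\ref{theo:apunif}, together with the observation that the singular points of $\cXbar$ are isolated cyclic quotient singularities minimally resolved by $\pi_{\cXbar}$; nothing to add there. For the second assertion you take a genuinely more self-contained route than the paper, which argues in three lines: by Mehta--Seshadri \cite{MehSes80}, stability forces $\rho$ to act on $\CP^1$ without a common fixed point; together with Remark~\ref{rmk:good} this rules out $\Sbar$ being a football, so $\Sbar$ has no nontrivial holomorphic vector field; and the key implication ``no fields on $\Sbar$ implies no fields on $\cXhat$'' is simply quoted from \cite{RolSin09}. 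You re-prove that quoted implication from scratch: descent of vector fields through the rigid exceptional curves and the ruling, vanishing of vertical fields because holomorphic sections of the flat unitary adjoint bundle are parallel and would give $\rho$-invariant elements of $\mathfrak{sl}_2$ (impossible when $\rho$ is irreducible, i.e. has no common fixed point on $\CP^1$), and the classification of good $2$-orbifolds admitting vector fields. Besides independence from \cite{RolSin09}, this buys an honest treatment of the one case the paper's ``in particular'' glosses over: genus-one base with empty parabolic structure. There $\Sbar=\Sigma$ is an elliptic curve and $\cXhat=\PP(E)=\Sigma\times_\rho\CP^1$ does carry a nowhere-vanishing holomorphic vector field (the constant horizontal field on the universal cover descends), so the ``no nontrivial holomorphic vector field'' clause is genuinely unavailable in that corner and must be read as assuming a nonempty parabolic structure; only the CSCK conclusion survives, and you recover it correctly from \cite{ApoTon06} (alternatively: every holomorphic field on such a $\PP(E)$ is nowhere vanishing, whereas $(\delb s)^\sharp$ of a non-CSCK extremal metric would have to vanish at a critical point of $s$). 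The price of your route is that the descent steps you carry out by hand are precisely the content of the \cite{RolSin09} citation; both versions are sound, yours being the more explicit and, on the exceptional case, the more careful one.
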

\begin{proof}
If $\cX\to\Sigma$ is polystable, $\Xbar$ admits a CSCK metric with
Kähler class $\Omega^{orb}$ by Lemma~\ref{lemma:csckms}.
The existence of extremal metrics for Kähler classes $\Omega$
sufficiently close to $\Omega^{orb}$ on $\cXhat$ follows from
 Theorem \ref{theo:apunif}.

If $\cX\to \Sigma$ is parabolically stable, $\pi^{orb}_1(\Sbar)$ acts with no fixed points on $\CP^1$ via the
morphism $\rho$ (this is a part of the Mehta-Seshadri theorem \cite{MehSes80}). 
This implies that $\Sigma$ is not $\CP^1$ with two marked points,
otherwise, $\cX\to\CP^1$ would be at best polystable.
In particular,  $\Sbar$ has no nontrivial
holomorphic vector field.
Following  \cite{RolSin09}, we deduce that $\cXhat$ has no nontrivial
holomorphic vector field either. In conclusion, every extremal metric on $\cXhat$
must be CSCK.
\end{proof}

  In \cite{RolSin09-2}, it was proved that (under some mild
 technical  assumptions), one can always find Kähler classes close to
 $\pi^*_{\cXbar}\Omega^{orb}$ which are represented by CSCK metrics, even when
 $\cX\to\Sigma$ is polystable but not stable. The technique is based on
 a refinment of Arezzo-Pacard gluing theory in presence of
 obstructions~\cite{ArePac09}. 

A computation of the
 Futaki invariant (cf. \S\ref{sec:fut}) allows to deduce this result
 from Theorem~\ref{theo:glue}
 in a simpler way. Indeed, an extremal metric is CSCK if and only if
 its Futaki invariant vanishes. The following theorem 
 also shows that there are always  
 Kähler classes near  $\pi^*_{\cXbar}\Omega^{orb}_c$ which are represented by non CSCK extremal metrics,
 when the parabolic structure is non trivial:
 \begin{theo}
   Suppose that $\cX\to \Sigma$ is a parabolically polystable ruled
surface which is not parabolically stable. We are also assuming that
the parabolic structure is not trivial and that
$\Sbar$ is not a football. 

Then the Lie algebra of holomorphic vector field of $\cXhat$ has
dimension $1$ and is spanned by some vector field $\Xi$. In addition, there exists an open cone $U\subset H^2(\cXhat,\RR)$ such that
$\Kr\subset U$ with the property that the equation $\Fut(\Xi,\cdot)=0$ cuts $U\cap
\Kc(\Xhat )$ along a non empty regular hypersurface containing $\Kr$
in its closure.
 \end{theo}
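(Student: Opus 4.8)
I would prove the two assertions in turn: the count of holomorphic vector fields is soft, while the statement about the Futaki hypersurface rests on the computation of \S\ref{sec:sfut}.

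\textbf{Step 1: the space of holomorphic vector fields.} Because $\cX\to\Sigma$ is parabolically polystable but not stable, the Mehta--Seshadri theorem \cite{MehSes80} presents $\cXbar\to\Sbar$ as a flat $\CP^1$-bundle whose holonomy $\rho\colon\pi_1^{orb}(\Sbar)\to\SU_2/\ZZ_2$ fixes the two disjoint sections $S_-,S_+$ of vanishing parabolic slope; hence $\rho$ is conjugate into the diagonal torus, with $\rho(l_j)$ the class of $\mathrm{diag}(e^{i\pi\alpha_j},e^{-i\pi\alpha_j})\neq\pm\id$ (since $\alpha_j\in(0,1)$), and the residual diagonal $\CC^*$ acts on $\cXbar$ fixing $S_-$ and $S_+$ pointwise; call its generator $\bar\Xi$. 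I claim $H^0(\cXbar,T\cXbar)=\CC\,\bar\Xi$. Indeed $\Sbar$ has no nonzero holomorphic vector field — it is not a football by hypothesis, not a teardrop or a bad football by Remark~\ref{rmk:good}, and not a smooth curve because a nontrivial parabolic structure forces at least one orbifold point ($q_j\geq2$) — so every holomorphic vector field on $\cXbar$ is $\pi_{\Sbar}$-vertical, hence restricts on each fibre to a $\rho$-invariant element of $\mathfrak{sl}_2(\CC)$; as the $\rho(l_j)$ are nontrivial diagonal matrices, their common centraliser is one-dimensional, proving the claim. Each marked point $x_j$ lies on $S_-\cup S_+$ by compatibility of the parabolic structure with the splitting $E=L_-\oplus L_+$, hence is fixed by $\bar\Xi$; running through the construction of \S\ref{secitbup} — the fibre, the two sections and every exceptional curve are $\bar\Xi$-invariant, so each blowup centre is again a fixed point — one sees $\bar\Xi$ lifts to a nonzero holomorphic vector field $\Xi$ on $\cXhat$ (compare \cite{RolSin05,RolSin09}). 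Conversely $\pi_{\cXbar}$ is the minimal resolution of the isolated quotient singularities of $\cXbar$, so a holomorphic vector field on $\cXhat$ pushes down to a holomorphic vector field on $\cXbar$ off the finitely many singular points and extends across them by normality; this gives $H^0(\cXhat,T\cXhat)\hookrightarrow H^0(\cXbar,T\cXbar)$. Therefore $H^0(\cXhat,T\cXhat)=\CC\,\Xi$ is one-dimensional.

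\textbf{Step 2: vanishing of $\Fut(\Xi,\cdot)$ off the exceptional directions.} Viewed as in \S\ref{sec:fut} as a function of the Kähler class, $\Omega\mapsto\Fut(\Xi,\Omega)$ extends to a polynomial on $H^2(\cXhat,\RR)$. I would show it vanishes on every class of the form $\pi_{\cXbar}^*\Omega^{orb}$ with $\Omega^{orb}\in\Kc(\cXbar)$: for such a class Lemma~\ref{lemma:csckms} provides an orbifold CSCK metric on $\cXbar$, so the orbifold Futaki invariant $\Fut_{\cXbar}(\bar\Xi,\Omega^{orb})$ vanishes; since $\Xi$ projects to $\bar\Xi$ and the exceptional curves of $\pi_{\cXbar}$ are contracted to points (so $\pi_{\cXbar}^*\Omega^{orb}$ restricts trivially to each of them), functoriality of the Futaki invariant yields $\Fut_{\cXhat}(\Xi,\pi_{\cXbar}^*\Omega^{orb})=\Fut_{\cXbar}(\bar\Xi,\Omega^{orb})=0$. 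In particular $\Fut(\Xi,\cdot)$ vanishes on the classes $\pi_{\cXbar}^*\Omega^{orb}_c$ of \eqref{eq:orbkclass} for small $c>0$, hence, by polynomiality, on the $c\to0$ limit as well, so $\Kr$ lies in the closure of the zero set $\{\Fut(\Xi,\cdot)=0\}$.

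\textbf{Step 3: the zero set near $\Kr$, and the main obstacle.} Writing a Kähler class close to $\Kr$ in the form \eqref{eq:smoothkclass}, $\Omega=\pi_{\cXbar}^*\Omega^{orb}_c+\sum_j c^-_j[E^-_j]+\sum_j c^+_j[E^+_j]$, Step~2 gives $\Fut(\Xi,\Omega)=\ell(c^\pm)+O(|c^\pm|^2)$ with $\ell=\sum_j a^-_jc^-_j+\sum_j a^+_jc^+_j$ a linear form (there is no linear term in $c$ or in $\Omega_{\Sbar}$, since $\Fut$ vanishes identically on the slice $c^\pm_j=0$). The delicate computation of \S\ref{sec:sfut} is meant to produce the coefficients $a^\pm_j$ explicitly and, decisively, to show they are \emph{not all of the same sign} and not all zero — heuristically because $\Xi$ carries opposite weights along the two fixed sections $S_-,S_+$, so enlarging the exceptional curves at the two ends of the resolution string moves $\Fut(\Xi,\cdot)$ in opposite directions, and because a nontrivial parabolic structure guarantees that both strings $E^-_j$ and $E^+_j$ are present. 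Granting this, Lemmas~\ref{lemma:ray} and~\ref{lemma:kahcldeg} identify the Kähler cone near $\Kr$, after the invertible change of variables between the $c^\pm_j$ and the areas $\Omega\cdot E^\pm_j$, with (essentially) the octant $\{0<\Omega\cdot E^\pm_j<\epsilon\}$; since $\ell$ is not sign-definite there, $\{\ell=0\}$ meets this octant in a nonempty open set, and for a sufficiently small cone $U$ about $\Kr$ the quadratic remainder does not destroy this. Hence $\{\Fut(\Xi,\cdot)=0\}\cap\Kc(\cXhat)\cap U$ is nonempty, is cut out with nonvanishing differential ($d\Fut(\Xi,\cdot)=d\ell+O(|c^\pm|)\neq0$ near $\Kr$ by continuity), and contains $\Kr$ in its closure by Step~2, which is what is claimed. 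The only genuinely hard point is the equivariant-localisation computation of \S\ref{sec:sfut}: one must pin down the weight of $\Xi$ on each curve $E^\pm_j$ of the resolution string and extract the linear form $\ell$, the crucial output being its mixed-sign property, which is exactly what forces the zero locus to enter the Kähler cone rather than bound it.
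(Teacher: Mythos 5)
Your Steps 1 and 2 follow the same route as the paper (one‑dimensionality of the automorphism algebra via the absence of vector fields on $\Sbar$ together with Remark~\ref{rmk:good} and the football hypothesis, plus the vanishing of $\Fut(\Xi,\cdot)$ in the limit towards $\Kr$), and your Step 3 is structurally the paper's argument as well. But the decisive step is exactly the one you ``grant'': the mixed‑sign, non‑degeneracy property of the linear form $\ell$ in the exceptional directions. In the paper this is the entire content of Lemma~\ref{lemma:futs2}, and it is not a soft statement: it requires the weight computation of Lemma~\ref{lemma:weights}, the orbifold adaptations \eqref{eq:intt}--\eqref{eq:intst} of the LeBrun--Simanca integrals, the degenerate‑limit identity \eqref{eq:limom}, and then an explicit differentiation in the parameters $\tau^\pm=\Omega\cdot E_1^\pm$ showing that $d\Fut$ at $\Omega^{orb}_C$ is a \emph{positive} multiple of $(1-\alpha)\,d\tau^+-\alpha\,d\tau^-$. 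Your heuristic (``$\Xi$ has opposite weights along $S_\pm$, so growing the two strings moves $\Fut$ in opposite directions'') does not establish this: the actual coefficients involve the prefactor $1-\tfrac{6}{96\pi}\bar s^\Omega$ multiplying the $(1-\alpha)\tau^+-\alpha\tau^-$ term plus a separate $\bar s^\Omega$-weighted cubic term, and their signs are only controlled after the adiabatic choice of a large constant $C$ (so that $\bar s^{\Omega^{orb}_C}\to 8\pi$ and the prefactor tends to $\tfrac13>0$). Without that computation one cannot exclude that $\ell$ is sign‑definite or identically zero on the Kähler directions, in which case the zero locus would miss, or fail to be transverse to, $\Kc(\cXhat)$ near $\Kr$. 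So as written the proposal has a genuine gap precisely at the point the theorem lives.

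Two secondary remarks. First, your appeal in Step 2 to ``functoriality of the Futaki invariant'' at the class $\pi_{\cXbar}^*\Omega^{orb}$ is not legitimate as stated: that class is degenerate on $\cXhat$ (it kills the $E_j^\pm$), so $\Fut_{\cXhat}(\Xi,\cdot)$ is not defined there by the integral formula, and $\Fut$ is a rational (not polynomial) function of $\Omega$ because of $\bar s^\Omega=8\pi\,c_1\cdot\Omega/\Omega^2$; the paper obtains the boundary behaviour instead from the explicit limit \eqref{eq:limom}, which in the polystable case $\parmu(S_+)=0$ gives $0$ — consistent with your conclusion, but by a computation rather than an abstract continuity principle. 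Second, a small care point in Step 1: ruling out holomorphic vector fields on $\Sbar$ needs, besides ``not a football/teardrop'', the observation that a nontrivial parabolic structure also excludes the smooth sphere and the smooth torus (a torus with one orbifold point has none, since translations do not fix the point); your wording gestures at this but should state it, as the paper does by citing \cite{RolSin09-2}.
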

 \begin{proof}
   The condition of stability implies that $\Sbar$ cannot be a
   teardrop or a football with two singularities of distinct orders
   (cf. Remark \ref{rmk:good}). If $\Sbar$ is a footbal with two
   singularities of the same order, $\Xbar$ and $\Xhat$ are actually
   toric and the Lie algebra of holomorphic vector fields is two
   dimensional. In all the other cases, $\Sbar$ has no nontrivial
   holomorphic vector fields and it follows that the
   Lie algebra is one dimensional (cf. \cite{RolSin09-2}).
The property of the Futaki invariant then follows from Lemma
\ref{lemma:futs2}. 
 \end{proof}

\section{Unstable parabolic ruled surfaces and test configurations}
The aim of this section of to prove the statement $(2)\Rightarrow (3)$
of Theorem \ref{theo:A}.
Let $\cX\to \Sigma$ be a geometrically ruled surface with parabolic
structure and rational weights as in Theorem \ref{theo:A}. In this
section, we shall assume that $\cXhat$ carries no nontrivial
holomorphic vector fields and that $\cX\to\Sigma$ is parabolically unstable.
Then, there is a holomorphic section of $\cX\to\Sigma$, denoted $S$, such that 
 $\parmu(S)\leq 0$.

\subsection{Holomorphic sections and Extensions}
By definition of a geometrically ruled
surface, $\cX=\PP(E)$, where $E\to \Sigma$ is a rank $2$ holomorphic
vector bundle. The section $S$ corresponds to a 
holomorphic line bundle $L_+\subset E$ and we have an exact sequence
of holomorphic vector bundles 

$$
\xymatrix{
0\ar[r] &  L_+\ar[r]\ar[dr]& \ar[d]\ar[r]E& \ar[dl]\ar[r] L_-& 0 \\
&&\Sigma &&
}
$$
where $L_-=E/L_+$. The vector bundle $E$ must be an extension
bundle; more precisely, $E$ is defined by a element 
$\tau\in H^1(\Sigma,L_-^*\otimes L_+)$. Such an extension will be denoted
$E=E_\tau$. 

Let 
$U_j$ be an open cover of $\Sigma$ and a cocycle $\tau_{ij}:U_i\cap U_j\to
L_-^*\otimes L_+$ defining $\tau$. 
Let $\cL_\pm\to \CC\times \Sigma$ be 
the holomorphic line bundles obtained as the pullback of $L_\pm$ via
the canonical projection $\CC\times \Sigma\to\Sigma$.
We introduce the extension bundle $\cE\to \CC\times \Sigma$ defined as follows:
 the restriction of $\cE$  to any open set $\CC\times U_j$ is
 isomorphic to
 $(\cL_+\oplus \cL_-)|_{\CC\times U_j}$ and the
transition maps on $\CC\times (U_i\cap U_j)$ are given by 
$$(\lambda,z,l_+,l_-)\mapsto
(\lambda, z, l_++\lambda \tau_{ij}(z)\cdot l_-,l_-),$$
where $\lambda\in\CC$, $z\in U_i\cap U_j$ and $l_\pm$ belong to the
fiber of $L_\pm \to \Sigma$ over $z$.

The restriction of $\cE$ over $\{1\}\times\Sigma$ is
canonically  identified to $E_\tau\simeq E \to\Sigma$ and
the bundle $\cE$ sits in an exact sequence
$$
\xymatrix{
0\ar[r] &  \cL_+\ar[r]\ar[dr]& \ar[d]\ar[r]\cE& \ar[dl]\ar[r] \cL_-& 0 \\
&&\CC\times\Sigma &&
}
$$
There is an obvious $\CC^*$-action defined on 
open sets, and given by $u \cdot (\lambda,z,l_+,l_-)=(u\lambda , z,
u l_+,l_-)$. This action lifts as an linear
$\CC^*$-action on $\cE\to \CC\times \Sigma$.
Its restriction to $\cE_{\{0\}\times \Sigma}$ identified to $L_+\oplus
L_-\to \Sigma$ is the  induced action on the fibers given by 
$u\cdot (l_+,l_-)=(u l_+,l_-)$.

Passing to the projectivization $\cM=\PP(\cE)$, we obtain a ruled manifold
$\cM\to \CC\times \Sigma$. In particular, the line bundle
$\cL_+\subset \cE$
defines a divisor $\cS\subset \cM$ with the property that $\cS\cap
\cM_1$ is identified to $S\subset \cX$ whereas $\cS\cap\cM_0$ is
identified to $S_+\subset \PP(L_+\oplus L_-)$. We summarize our observations in the following lemma:
\begin{lemma}
\label{lemma:cstardefo}
Given a geometrically ruled surface $\cX\to\Sigma$ and a holomorphic
section $S$, there exists a complex manifold $\cM$ endowed with a $\CC^*$-action
  and  a $\CC^*$-equivariant submersive holomorphic map
  $\pi_\CC:\cM\to\CC$, with respect to the standard $\CC^*$-action on $\CC$,
  such that:
  \begin{itemize}
  \item $\cM_1=\pi_\CC^{-1}(1)$ is isomorphic to $\cX\simeq \PP(E)$;
\item $\cM_0=\pi_\CC^{-1}(0)$ is isomorphic to $\PP(L_+\oplus L_-)$
  with the above notations.
\item In $\cM_0$, the corresponding divisors  $S_+$ and $S_-$ are
  respectively the
  attractive and repulsive sets  of fixed points in $\cM$ under the
  $\CC^*$-action.
\item There exists a $\CC^*$-invariant section $\cS$ of $\cM\to
  \CC\times\Sigma$ such that $\cS\cap \cM_0=S_+$ and $\cS\cap \cM_1=S$.
  \end{itemize}
\end{lemma}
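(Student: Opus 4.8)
The plan is to carry out the construction explicitly, cocycle by cocycle, exactly along the lines already sketched in the paragraphs preceding the statement, and then verify each bullet point of Lemma \ref{lemma:cstardefo} by inspection. First I would set $\cM = \PP(\cE)$ with the holomorphic submersion $\pi_\CC : \cM \to \CC$ obtained by composing the bundle projection $\cM \to \CC \times \Sigma$ with the first factor projection $\CC \times \Sigma \to \CC$; since $\cE \to \CC \times \Sigma$ is a rank-$2$ holomorphic vector bundle, $\cM \to \CC \times \Sigma$ is a holomorphic $\CP^1$-bundle, hence $\pi_\CC$ is a holomorphic submersion onto $\CC$. The $\CC^*$-action on $\cE$ described in the excerpt is by holomorphic bundle automorphisms covering the action $u\cdot(\lambda,z) = (u\lambda,z)$ on $\CC\times\Sigma$, so it descends to a holomorphic $\CC^*$-action on $\cM$ for which $\pi_\CC$ is equivariant with respect to the standard action on $\CC$. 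I must check that the action is well-defined on $\cM$: on overlaps $\CC \times (U_i\cap U_j)$ the transition map sends $(\lambda,z,l_+,l_-)$ to $(\lambda,z,l_+ + \lambda\tau_{ij}(z)l_-, l_-)$, and one verifies directly that it commutes with $u\cdot(\lambda,z,l_+,l_-) = (u\lambda,z,ul_+,l_-)$ precisely because the new $\cL_+$-component $l_+ + \lambda\tau_{ij}l_-$ scales by $u$ when $l_+\mapsto ul_+$ and $\lambda\mapsto u\lambda$ simultaneously. This is the one genuinely content-bearing computation, and it is short.

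Next I would identify the fibers. Over $\lambda = 1$ the transition maps of $\cE$ are $(z,l_+,l_-)\mapsto(z,l_+ + \tau_{ij}(z)l_-,l_-)$, which are by definition the transition maps of the extension bundle $E_\tau = E$; hence $\cE|_{\{1\}\times\Sigma} \cong E$ and $\cM_1 = \PP(\cE|_{\{1\}\times\Sigma}) \cong \PP(E) = \cX$. Over $\lambda = 0$ the transition maps reduce to $(z,l_+,l_-)\mapsto(z,l_+,l_-)$ on each factor, so $\cE|_{\{0\}\times\Sigma} \cong L_+ \oplus L_-$ (the split bundle), giving $\cM_0 \cong \PP(L_+\oplus L_-)$. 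For the divisor $\cS$: the holomorphic line sub-bundle $\cL_+ \subset \cE$ is preserved by all the transition maps (the formula only modifies the $\cL_+$-component by adding something, never touches $\cL_-$, and sends $\cL_+$ into itself), so it defines a holomorphic sub-$\CP^1$-bundle section, i.e. a divisor $\cS \subset \cM$ which is a section of $\cM \to \CC\times\Sigma$. It is $\CC^*$-invariant because $\cL_+$ is a $\CC^*$-invariant sub-bundle of $\cE$. Its intersection with $\cM_1$ is the section of $\PP(E)$ determined by the sub-line-bundle $L_+ \subset E$, which is exactly $S$; its intersection with $\cM_0$ is the section of $\PP(L_+\oplus L_-)$ determined by the first summand, which is $S_+$. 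Similarly $\cL_-$ gives the quotient, and the corresponding section $S_-$ of $\PP(L_+\oplus L_-)$ is defined by the sub-line-bundle that is the ``other'' summand.

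Finally, for the attractive/repulsive statement, I would analyze the $\CC^*$-action on a fiber $\cM_{(\lambda,z)} \cong \CP^1$. On $\cM_0$, in homogeneous coordinates $[l_+ : l_-]$ on $\PP(L_+\oplus L_-)$ the action is $u\cdot[l_+:l_-] = [ul_+ : l_-]$, whose fixed point set consists of $[1:0] = S_+$ and $[0:1] = S_-$; for a point with $l_-\neq 0$ and $l_+\neq 0$ we have $u\cdot[l_+:l_-]\to[1:0]$ as $|u|\to\infty$ and $\to[0:1]$ as $u\to 0$, so $S_+$ is the attractive fixed locus and $S_-$ is the repulsive one, as claimed. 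I would also note that the fixed-point set of the $\CC^*$-action on all of $\cM$ lies over $\{0\}\times\Sigma$ (since the action on the base moves $\lambda \neq 0$), and there it is exactly $S_+ \sqcup S_-$, so the phrasing ``attractive and repulsive sets of fixed points in $\cM$'' is justified. The main obstacle, such as it is, is purely bookkeeping: making sure the transition data, the $\CC^*$-action, and the sub-bundle $\cL_+$ are all simultaneously compatible over triple overlaps $U_i\cap U_j\cap U_k$, which follows from the cocycle condition on $\tau_{ij}$; no analytic input is needed and there is no serious difficulty, only the need to be careful with conventions for $\PP(E)$ versus $\PP(E^*)$ and for which summand gives $S_+$.
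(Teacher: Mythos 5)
Your proposal is correct and follows essentially the same route as the paper: it takes $\cM=\PP(\cE)$ for the twisted extension bundle $\cE\to\CC\times\Sigma$ with transition maps $(\lambda,z,l_+,l_-)\mapsto(\lambda,z,l_++\lambda\tau_{ij}(z)l_-,l_-)$, the $\CC^*$-action $u\cdot(\lambda,z,l_+,l_-)=(u\lambda,z,ul_+,l_-)$, and the divisor $\cS$ cut out by $\cL_+$, which is exactly the construction the paper gives before stating the lemma. The only difference is that you spell out the routine verifications (equivariance of the transition maps, identification of the fibers over $\lambda=0,1$, and the attractive/repulsive analysis on $\cM_0$) that the paper leaves implicit, and these checks are all accurate.
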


\begin{rmk}
\label{rmk:diffeo}
As a consequence of the above lemma, the pair $(\cM_1,S)$
  is diffeomorphic to $(\cM_0,S_+)$. In particular $S^2=S_+^2$.
\end{rmk}

\subsection{Test configurations and the Donaldson-Futaki invariant}
\label{sec:tc}
The definition of $K$-stability involves general \emph{test
configurations} (cf. \cite{Don02}). 
 However \emph{regular} test configuration will be
sufficient for our purpose, that is:
\begin{itemize}
\item a complex manifold $\cM$ with a holomorphic $\QQ$-line bundle $\cP\to\cM$, 
\item a $\CC^*$-action on $\cM$ that lifts to a linear $\CC^*$-action
  on $\cP\to \cM$.
\item $\CC^*$-equivariant submersive holomorphic map
  $\pi_\CC:\cM\to\CC$
\end{itemize}
such that $\cP\to\cM$ is a fiberwise polarization. In other words,  $\cP$ restricted to  $\cM_\lambda=\pi_\CC^{-1}(\lambda)$
  is an ample $\QQ$-line bundle for every $\lambda\in\CC$ (i.e. a
  \emph{polarization}). 

The Donaldson-Futaki invariant is defined in the following way:
the $\CC^*$-action of a test configuration, $\cP\to\cM\to\CC$ induces
a $\CC^*$-action on the central fiber.
  The vector space
of holomorphic sections $V_k=H^0(\cM_0,\cP_0^k)$ is also acted on by
$\CC^*$. The quantity $F(k)= \frac{w_k}{kd_k}$, where $w_k$ is
the weight of the action on $V_k$ and $d_k=\dim V_k$ admits an
expansion $F(k)=F_0 + k^{-1}F_1 + \cO(k^{-2})$ and $F_1$ is the
Donaldson-Futaki invariant of the test configuration. 

A complex manifold polarized by a $\QQ$-line bundle is said to be $K$-stable if for every
test configuration $\cP\to\cM\to\CC$ where it appears as the generic
fiber, we have $F_1\geq 0$ and $F_1=0$ if the test configuration is a
product.

On the other hand, the usual Futaki invariant~\cite{Fut83} is an
object defined in a purely analytical way, on a smooth Kähler manifold
$\cM_0$ with Kähler class $\Omega$.
Given a holomorphic vector field $\Xi$ of type $(1,0)$  and a
Kähler metric $\omega$ on $\cM_0$, the Futaki invariant is given by
$$
\Fut(\Xi,\Omega)= -2 \int _{\cM_0} \Xi\cdot Gs \;d\mu
$$
where $d\mu$ is the volume form, $s$ is the scalar curvature and $G$
is the Green function associated to the metric with Kähler form
$\omega$. It turns out that the Futaki invariant depends only on the
Kähler class $\Omega$, not on its representative $\omega$ used in the
definition. If $\Xi$ vanishes at some point, there exists a smooth
function  $t:\cM_0\to \CC$ such that $\Xi=\del^\sharp t := (\delb 
t)^\sharp$ (cf. \cite{LebSim94}). In other words, $\Xi$  is the $(1,0)$-component of the
gradient of some smooth function. Then one can show the identity
\begin{equation}
  \label{eq:fut}
\Fut(\Xi,\Omega)= \int _{\cM_0} t(\bar s- s)\;d\mu
\end{equation}
where the constant $\bar s$ is the average of $s$.
\begin{rmk}
We are using the opposite sign convention to the one used by
  Donaldson for the Futaki invariant~\eqref{eq:fut}, which explains
  the sign discrepancy  when quoting his result at Proposition~\ref{prop:don}.
\end{rmk}
It was pointed out by Donaldson that the Donaldson-Futaki agrees with
the Futaki invariants up to a
constant  in the
regular case:
\begin{prop} [{\cite[Proposition 2.2.2]{Don02}}]
\label{prop:don}
For a regular test configuration $\cP\to\cM\to\CC$ we have
$$  F_1 = \frac 1{4vol(\cM_0)}\Fut(\Xi,\Omega)$$
where $\Omega$ is the Kähler class defined by the polarization
$\cP_0\to\cM_0$, ${vol(\cM_0)}$ the corresponding volume and $\Xi$ is the Euler vector field of the
$\CC^*$-action on $\cM_0$.
\end{prop}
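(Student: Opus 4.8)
The plan is to follow Donaldson's argument \cite[Proposition 2.2.2]{Don02}: one compares the two asymptotic expansions that respectively define $F_1$ and $\Fut(\Xi,\Omega)$. First I would set up the analytic side on the smooth central fibre. Since the test configuration is regular, $\cM_0$ is a smooth compact polarized Kähler manifold $(\cM_0,\cP_0)$, of complex dimension $n$ say, carrying the restricted $\CC^*$-action, linearized on $\cP_0$. Fix an $S^1$-invariant Hermitian metric on $\cP_0$ with curvature $-2\pi i\omega$, where $\omega\in\Omega$, and let $H\colon\cM_0\to\RR$ be the Hamiltonian, with respect to $\omega$, of the infinitesimal generator of the $S^1$-action, normalized through the linearization (so that the value of $H$ at a fixed point equals the weight of the $\CC^*$-action on the corresponding fibre of $\cP_0$). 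A $\CC^*$-action on a compact Kähler manifold has fixed points, so the Euler vector field $\Xi$ vanishes somewhere; hence $\Xi=\partial^\sharp t$ in the notation of \eqref{eq:fut}, with $t=\pm H$. The choice of sign here is precisely the source of the convention discrepancy recorded in the Remark preceding the statement.

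Next I would invoke the two expansions. Hirzebruch--Riemann--Roch gives
\begin{equation*}
d_k=\dim H^0(\cM_0,\cP_0^k)=a_0k^n+a_1k^{n-1}+\cdots,\qquad a_0=\mathrm{vol}(\cM_0),
\end{equation*}
with $a_1$ the Todd-class term, proportional to $\int_{\cM_0}c_1(\cM_0)\wedge\omega^{n-1}$ and hence to $\bar s\,\mathrm{vol}(\cM_0)$, where $\bar s$ is the average scalar curvature. Equivariant Riemann--Roch, equivalently the asymptotic expansion of the equivariant density of states, gives
\begin{equation*}
w_k=b_0k^{n+1}+b_1k^n+\cdots,\qquad b_0=\int_{\cM_0}H\,d\mu,
\end{equation*}
with $b_1$ a fixed universal multiple of $\int_{\cM_0}(Hs+\Delta H)\,d\mu$; the point is that the subleading correction to the equivariant density of states is governed by the scalar curvature. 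Since $F(k)=w_k/(kd_k)$, expanding in powers of $1/k$ gives
\begin{equation*}
F_1=\frac{a_0b_1-a_1b_0}{a_0^2}=\frac1{a_0}\Bigl(b_1-\frac{a_1}{a_0}\,b_0\Bigr);
\end{equation*}
substituting the coefficients above, using $\int_{\cM_0}\Delta H\,d\mu=0$ and letting the scalar-curvature terms combine, one obtains
\begin{equation*}
F_1=\frac1{4\,\mathrm{vol}(\cM_0)}\int_{\cM_0}H\,(s-\bar s)\,d\mu.
\end{equation*}
Finally, applying the reformulation \eqref{eq:fut} of the Futaki invariant to $\Xi=\partial^\sharp t$ with $t=\pm H$ turns the right-hand side into $\pm\tfrac1{4\,\mathrm{vol}(\cM_0)}\Fut(\Xi,\Omega)$, which is the claimed identity, signs included.

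The hard part is the subleading coefficient $b_1$: one must establish that the $k^{-1}$-correction to the equivariant density of states (equivalently, the $k^n$-term in the equivariant Riemann--Roch expansion of the weight $w_k$) is the prescribed multiple of $\int_{\cM_0}(Hs+\Delta H)\,d\mu$ --- this is the equivariant refinement of the Tian--Yau--Zelditch/Lu expansion of the Bergman kernel --- and then to track all the factors of $2\pi$, of $2$, and the signs carefully enough that the universal constant in front of $\Fut$ comes out to be exactly $1/4$ and the sign matches \eqref{eq:fut}. Everything else is routine Hirzebruch--Riemann--Roch bookkeeping and elementary algebra of the two polynomial expansions. An essentially equivalent route, worth recording as an alternative, is to realize $F_1$ as the $\CC^*$-weight at the central fibre of the CM (Deligne-pairing) line bundle and to use its fibrewise curvature formula, which produces $\int_{\cM_0}H(s-\bar s)\,d\mu$ directly; this is just a repackaging of the same Riemann--Roch input.
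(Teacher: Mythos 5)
Your proposal is correct and follows essentially the same route as the source this proposition is quoted from: the paper itself gives no proof but cites Donaldson, whose argument is precisely the comparison of the Riemann--Roch expansion of $d_k$ with the equivariant expansion of $w_k$, extraction of $F_1=\bigl(a_0b_1-a_1b_0\bigr)/a_0^2$, and identification with $\int_{\cM_0}H(s-\bar s)\,d\mu$ up to the universal constant and the sign convention noted in the Remark preceding the statement. Your sketch reproduces this faithfully, including correctly flagging that the only substantive input is the subleading (scalar-curvature) term in the equivariant weight expansion.
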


In Lemma \ref{lemma:cstardefo}, we already produced a regular
$\CC^*$-equivariant family of deformation $\cM\to\CC$ of the ruled
surface  $\cM_1\simeq \PP(\cE)=\cM\to \CC$ with the property that
$\cM_0\simeq \PP(L_+\oplus L_-)$. Relying on this result, we can
easily construct test configurations for the iterated blowup $\cXhat$.

The central fibers $\cM_0$  has two divisors $S_\pm$ determined by
the line bundles $L_\pm$. The manifold $\cM$ also has a divisor
$\cS_+$ defined by the subbundle $\cL_+\subset \cE$ with the property
thatt $S_+=\cS_+\cap \cM_0$. 

The parabolic structure on $\cM_1\simeq \PP(E)=\cX$ consists a finite set of
marked points $x_j$ in distinct fibers of $\cX\to\Sigma$.
Let $X_j$ be the closure in $\cM$ of the orbit of the
points $x_j$ under the $\CC^*$-action. The points $x_j^\lambda=X_j\cap
\cM_\lambda$ and weights 
$\alpha_j$ define a 
parabolic structure on each fiber of $\cM_\lambda$. 
Notice that all the points of the parabolic structure induced on
$\cM_0\to \Sigma$ must belong to $S_\pm$.
\begin{rmk}
\label{rmk:key}
The above construction gives in particular a parabolic structure on
$\cM_0\to\Sigma$. Using Remark~\ref{rmk:diffeo}, we have
$\parmu(S_+)=\parmu(S)$, by definition.
\end{rmk}
Following the
algorithm described at \S\ref{secitbup}, one can make a iterated blowup of
every deformation $\cM_\lambda$ simultaneously. This boils down to perform an iterated
blowup of the curves $X_j$ in $\cM$. Thus, we obtain a blowup
$\cMhat\to \cM$ with the property that  $\cMhat_1 \simeq \cXhat$.
The $\CC^*$-action lifts to a $\CC^*$-action on $\cMhat$ and we
actually have a $\CC^*$-equivariant family of deformations $\cMhat\to
\CC$.

 For a ruled surface
$h^{2,0}=h^{0,2}=0$, hence any class in $H^2$ is of type
$(1,1)$.
The fibration $\cMhat\to \CC$ is smoothly trivial. So the cohomology
spaces $H^2(\cMhat_\lambda,\RR)$ are all identified canonically to
$H^2(\cXhat,\RR)$. We consider the cohomology class $\Omega^{orb}_c$
as a class in $H^2(\cXhat,\RR)$. We saw that for $c>0$ sufficiently
small, the class $\Omega^{orb}_c$ may be perturbed to give  a Kähler
class on $\cXhat$ (cf. \S\ref{sec:bkahl}). The same result applies to
$\Omega^{orb}_c$ understood as a cohomology class on $\cMhat_0$ and we
have the following result:
\begin{lemma}
  For $c>0$ and $\epsilon>0$  sufficiently small, the cohomology classes
  $\Omega$ in Lemma \ref{lemma:kahcldeg} define Kähler classes on
  $\cMhat_\lambda$ for every $\lambda\in\CC$.
\end{lemma}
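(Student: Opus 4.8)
The plan is to split according to the two fiber types, $\cMhat_\lambda$ with $\lambda\neq 0$ and the central fiber $\cMhat_0$, reducing each to Lemma~\ref{lemma:kahcldeg} (for the central fiber, to the constructions underlying it).

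For $\lambda\neq 0$ the $\CC^*$-action of the test configuration gives a biholomorphism $\cM_\lambda\xrightarrow{\sim}\cM_1\simeq\cX$ carrying the induced parabolic structure onto that of $\cX$, which lifts through the simultaneous blowup of \S\ref{secitbup} to a biholomorphism $\cMhat_\lambda\xrightarrow{\sim}\cXhat$. Because $\CC^*$ is connected, it acts trivially on $H^2(\cMhat,\RR)$; hence this biholomorphism induces, on $H^2$, exactly the canonical identification $H^2(\cMhat_\lambda,\RR)\cong H^2(\cXhat,\RR)$ coming from the smooth triviality of $\cMhat\to\CC$. So $\Omega|_{\cMhat_\lambda}$ corresponds to $\Omega|_{\cXhat}$, with the same intersection numbers against the exceptional curves, and is Kähler by Lemma~\ref{lemma:kahcldeg}.

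The substantial case is $\lambda=0$. Here $\cM_0\simeq\PP(L_+\oplus L_-)\to\Sigma$ is again a geometrically ruled surface carrying a parabolic structure with rational weights (the only peculiarity being that the marked points $x_j^0$ all lie on $S_+\cup S_-$), and by construction $\cMhat_0\to\cM_0$ is precisely the iterated blowup that this parabolic structure encodes. I would then re-run the constructions of \S\ref{secitbup}--\ref{sec:bkahl} with $\cM_0$ in place of $\cX$: contracting the $E^\pm$-strings of $\cMhat_0$ produces a geometrically ruled orbifold surface $\cMbar_0\to\Sbar$ with minimal resolution $\cMhat_0$; formula \eqref{eq:orbkclass} defines an orbifold Kähler class $\Omega^{orb}_c$ on $\cMbar_0$ for $c$ small, represented by the metric $\omega^{orb}_c$ of \eqref{eq:omc} (whose construction uses only fiberwise positivity of the curvature of $\cO_{\cMbar_0}(1)$, available on any geometrically ruled orbifold surface); and the proof of Lemma~\ref{lemma:kahcldeg} --- Kodaira stability of Kähler metrics under orbifold blowup, together with Arezzo--Pacard gluing applied to the Calderbank--Singer metrics --- applies verbatim. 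This yields an $\epsilon_0>0$ such that every class of the shape \eqref{eq:smoothkclass} (with $\cMbar_0$ in place of $\cXbar$) satisfying $0<\Omega\cdot E^\pm_j<\epsilon_0$ is Kähler on $\cMhat_0$.

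Finally, one matches this with the statement. Under the canonical identification $H^2(\cMhat_0,\RR)\cong H^2(\cXhat,\RR)$ the class $\pi^*_{\cMbar_0}\Omega^{orb}_c$ and the classes $[E^\pm_j]$ are the restrictions to $\cMhat_0$ of classes living on the total space $\cMhat$, which restrict on $\cMhat_1=\cXhat$ to the data defining $\Omega$; hence the class shown Kähler on $\cMhat_0$ above is exactly $\Omega|_{\cMhat_0}$, with unchanged numbers $\Omega\cdot E^\pm_j$. Choosing $\epsilon$ smaller than $\epsilon_0$ and than the constant given by Lemma~\ref{lemma:kahcldeg} for $\cXhat$ proves the claim for all $\lambda\in\CC$. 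I expect the only genuine obstacle to be this last bookkeeping step --- checking that the restriction of the fixed class $\Omega$ to each fiber $\cMhat_\lambda$, $\lambda=0$ included, is really of the form \eqref{eq:smoothkclass} with the same exceptional intersection numbers; no new analysis is needed, everything beyond this being inherited from Lemma~\ref{lemma:kahcldeg}.
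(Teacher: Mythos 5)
Your argument is correct and is essentially the paper's own reasoning: the paper justifies this lemma only by the remark that the construction of Lemma \ref{lemma:kahcldeg} ``applies to $\Omega^{orb}_c$ understood as a cohomology class on $\cMhat_0$'', i.e.\ by rerunning the orbifold contraction and gluing construction on the central fiber, the fibers $\cMhat_\lambda$ with $\lambda\neq 0$ being biholomorphic to $\cXhat$ via the $\CC^*$-action. Your write-up simply makes explicit the identification of classes under the smooth triviality of $\cMhat\to\CC$ and the bookkeeping of the intersection numbers $\Omega\cdot E^\pm_j$, which the paper leaves implicit.
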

In particular, if the constants $c$ and $c^\pm_j$ are all chosen
rational, the cohomology class $\Omega$ is rational and it defines a
$\QQ$-line bundle $\cP\to\cMhat$ with the property that $c_1(\cP)=\Omega$.
We summarize our construction in the following proposition:
\begin{prop}
\label{prop:tc}
  Let $\cX\to\Sigma$ be a parabolic ruled surface with rational
  weights and $S$ a holomorphic section. There exists a sufficiently
  small open cone $U\subset H^2(\cXhat,\RR)$ that contains
  $\Kr$ with the property that for every rational Kähler class
  $\Omega\in\Kc(\cXhat)\cap U$, we can define a test configuration
  $\cMhat\to\CC$, polarized by a $\QQ$-line bundle $\cP\to\cM$ with
  the following property:
  \begin{enumerate}
\item $\cMhat\to\CC\times\Sigma$ is an iterated blowup encoded by the
  parabolic structure of the ruled
  manifold $\cM\to\CC\times \Sigma$ given by
  Lemma~\ref{lemma:cstardefo}, endowed with the induced $\CC^*$-action.
  \item The retriction $\cP|_{\cMhat_1}\to \cMhat_1$ is identified to
    $\cXhat$ endowed with a $\QQ$-line bundle of first Chern class $\Omega$,
\item  $\cMhat_0$ is an iterated blowup of
  $\cM_0\simeq \PP(L_+\oplus L_-)$ encoded by the induced
  parabolic. All the parabolic points of $\cM_0$ where the blowups
  occur are located on the 
  sections $S_\pm$ corresponding to $L_\pm$.
  structure.
  \end{enumerate}
\end{prop}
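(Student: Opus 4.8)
The plan is to read the proposition off the construction carried out in this subsection, so that the argument is mostly bookkeeping together with one genuine point: checking that the polarisation can be made $\CC^*$-equivariant. First I would fix the cone $U$. Take it small enough that Lemma~\ref{lemma:ray} and Lemma~\ref{lemma:kahcldeg} both apply, and then shrink it further so that, via the smooth trivialisation $\cMhat\cong\cXhat\times\CC$, every class $\Omega\in\Kc(\cXhat)\cap U$ restricts to a Kähler class on each fibre $\cMhat_\lambda$, the central fibre $\cMhat_0$ included; this last property is exactly what the lemma immediately preceding the statement provides, applied to $\cM_0\simeq\PP(L_+\oplus L_-)$ in place of $\cXbar$.

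Next I would assemble the family. Starting from the $\CC^*$-equivariant deformation $\pi_\CC\colon\cM\to\CC$ of Lemma~\ref{lemma:cstardefo} and the $\CC^*$-invariant curves $X_j$ (closures of the $\CC^*$-orbits of the parabolic points $x_j$), I perform the iterated blowup of \S\ref{secitbup} along the $X_j$ --- continued-fraction algorithm and all --- to get $\pi_{\cM}\colon\cMhat\to\cM$. Since the centres $X_j$ are $\CC^*$-invariant the action lifts canonically, $\pi_\CC\circ\pi_{\cM}$ stays submersive and $\CC^*$-equivariant, $\cMhat_1\simeq\cXhat$, and $\cMhat_0$ is the iterated blowup of $\cM_0$ dictated by the induced parabolic structure; moreover all blown-up parabolic points lie on $S_\pm$, because the parabolic points induced on $\cM_0\to\Sigma$ are forced onto those two sections. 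This already yields (1), (3), and the first half of (2).

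For the polarisation I would argue that, once $\Omega$ is rational, the constants $c$ and $c^\pm_j$ appearing in \eqref{eq:smoothkclass} are rational, so after clearing denominators $\Omega$ becomes a $\ZZ$-combination of classes each represented by a genuine holomorphic line bundle on $\cMhat$: pullbacks of line bundles from $\Sbar$, the relative hyperplane bundle of $\PP(\cE)$, and the line bundles of the exceptional divisors of $\pi_{\cM}$. Hence $\Omega=c_1(\cP)$ for a holomorphic $\QQ$-line bundle $\cP\to\cMhat$, and by the choice of $U$ its restriction to every $\cMhat_\lambda$ has a Kähler --- hence ample --- first Chern class, so $\cP$ is a fibrewise polarisation and $\cP|_{\cMhat_1}$ is a $\QQ$-line bundle on $\cXhat$ with $c_1=\Omega$, which is (2).

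The main obstacle --- and the only step that is not pure bookkeeping --- is to lift the $\CC^*$-action to $\cP$ so that one genuinely obtains a regular test configuration. I would do this building block by building block: the pullbacks from $\Sbar$ carry the trivial linearisation along the $\CC$-factor; $\cO_{\PP(\cE)}(1)$ inherits a linearisation from the linear $\CC^*$-action on $\cE\to\CC\times\Sigma$ of Lemma~\ref{lemma:cstardefo}; and the line bundles of the exceptional divisors are $\CC^*$-equivariant because those divisors are $\CC^*$-invariant, so their defining sections are eigenvectors. Tensoring these linearisations gives a linear $\CC^*$-action on a suitable positive power of $\cP$, equivalently on $\cP$ as a $\QQ$-line bundle, covering the action on $\cMhat$, and $\cP\to\cMhat\to\CC$ is then the desired regular test configuration with generic fibre $(\cXhat,\cP|_{\cMhat_1})$. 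One last technical care concerns the uniformity of $U$: it must be chosen before $\Omega$ and in particular so that fibrewise ampleness survives on the special fibre $\cMhat_0$, which is precisely what the preceding lemma guarantees.
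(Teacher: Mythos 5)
Your proposal is correct and follows essentially the same route as the paper: the proposition is stated there as a summary of the construction of this section (Lemma~\ref{lemma:cstardefo}, the simultaneous iterated blowup along the $\CC^*$-invariant curves $X_j$ via \S\ref{secitbup}, the lemma on Kähler classes of the fibers $\cMhat_\lambda$, and the observation that a rational class yields a $\QQ$-line bundle $\cP$ with $c_1(\cP)=\Omega$). The only difference is that you spell out the fibrewise-ample $\QQ$-line bundle and its $\CC^*$-linearisation explicitly, details the paper leaves implicit; this is a welcome elaboration, not a different argument.
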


\section{On the Futaki invariant of blownup ruled surfaces}
\label{sec:fut}
In this section, we shall prove the following proposition.
\begin{prop}
\label{prop:futsign}
Let $\cX\to\Sigma$ be a parabolic ruled surface with rational weights. Let
$S$ be a holomorphic section such that $\parmu(S)\leq 0$. Then
for every open cone $U\subset H^2(\cXhat,\RR)$ such that Proposition~\ref{prop:tc} holds, there exists a rational Kähler class $\Omega\in
\Kc(\cXhat)\cap U$ such that corresponding test configuration
$\cP\to\cMhat\to\CC$ has non-positive Donaldson-Futaki invariant.
\end{prop}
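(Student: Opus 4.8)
The strategy is to reduce the computation of the Donaldson-Futaki invariant of the test configuration $\cP\to\cMhat\to\CC$ to an explicit intersection-theoretic quantity on the central fibre $\cMhat_0$, and then to show that, for $\Omega$ chosen appropriately close to the ray $\Kr$, this quantity has the same sign as $\parmu(S)$. By Proposition~\ref{prop:don}, $F_1$ is (up to the positive factor $1/(4\,\mathrm{vol}(\cMhat_0))$) the classical Futaki invariant $\Fut(\Xi,\Omega)$, where $\Xi$ is the Euler vector field of the $\CC^*$-action on $\cMhat_0$. So the real work is to compute $\Fut(\Xi,\Omega)$ on the blown-up ruled surface $\cMhat_0\to\Sigma$ and control its sign.

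First I would recall that $\cMhat_0$ is an iterated blowup of $\cM_0\simeq\PP(L_+\oplus L_-)$ at parabolic points lying on the two disjoint sections $S_\pm$, and that $S_-$ (resp.\ $S_+$) is the repulsive (resp.\ attractive) fixed-point set of the $\CC^*$-action, by Lemma~\ref{lemma:cstardefo} and Proposition~\ref{prop:tc}. The Euler field $\Xi$ vanishes along $S_\pm$ and along all the exceptional curves of the blowup over points of $S_\pm$; hence $\Xi=\partial^\sharp t$ for a Hamiltonian potential $t$ (in the sense of \eqref{eq:fut}) which is \emph{locally constant on each component of the fixed locus}, and in fact is determined up to affine normalization by its values on the two ends $S_\pm$. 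Using the localization formula \eqref{eq:fut}, $\Fut(\Xi,\Omega)=\int_{\cMhat_0} t(\bar s - s)\,d\mu$, and since $t$ is close to being fibrewise constant (the fibre direction is a very small perturbation once $c$ and the $c^\pm_j$ are small — this is the whole point of working near $\Kr$), the integral is governed by the "horizontal" geometry, i.e.\ by the degrees and self-intersections of $S_\pm$ and the exceptional curves. The plan is thus: (i) fix a convenient $\CC^*$-invariant Kähler metric $\omega\in\Omega$ on $\cMhat_0$ adapted to the projection to $\Sigma$ (bundle-type metric à la Apostolov--Tønnesen-Friedman, or simply the gluing metric of Lemma~\ref{lemma:kahcldeg}); (ii) compute $t$ explicitly as the moment map of $\Xi$, normalized so that $\int t\,d\mu=0$; (iii) expand $\Fut(\Xi,\Omega)$ as a function of the small parameters $c$ and $\Omega\cdot E_j^\pm$, extracting the leading term.

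The key computation — and the main obstacle — is showing that this leading term is a positive multiple of $-\parmu(S_+)=-\parmu(S)$ (the equality is Remark~\ref{rmk:key}). Concretely, after the reductions above one should find that, to leading order in the small parameters, $\Fut(\Xi,\Omega)$ is proportional to $c^2\cdot\big(S_+^2 + \sum_{x_j\notin S_+}\alpha_j - \sum_{x_j\in S_+}\alpha_j\big)$ times a positive constant depending on the base $\Sigma$; the contributions of the exceptional $E_j^\pm$-curves reassemble exactly into the parabolic correction terms $\pm\alpha_j$ because the iterated-blowup string over a point of weight $\alpha_j$ encodes $\alpha_j$ through its continued fraction \eqref{e1.844}--\eqref{e20.844}, and the self-intersection numbers $e_j^\pm$ enter the Futaki integral through the areas $c_j^\pm$ in precisely the combination that produces $\alpha_j$. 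Assembling this bookkeeping correctly, and verifying that the next-order terms do not overwhelm the leading one on a small enough cone $U$ about $\Kr$, is where the delicate part of the argument lies (this is the ``delicate computation for the Futaki invariant'' alluded to in the introduction). Once the sign is established, the hypothesis $\parmu(S)\le 0$ gives $\Fut(\Xi,\Omega)\ge 0$ in our sign convention, hence $F_1\le 0$ by Proposition~\ref{prop:don} and the remark on sign conventions, for a suitable rational $\Omega\in\Kc(\cXhat)\cap U$ — which is exactly the claim of Proposition~\ref{prop:futsign}.
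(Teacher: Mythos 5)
Your overall strategy is the paper's: reduce to the classical Futaki invariant of the central fibre via Proposition~\ref{prop:don}, compute it from the Hamiltonian of the induced circle action on $\cMhat_0$ using the fixed-point data of the blowup string, and let the continued-fraction weights reassemble into the parabolic terms so that, in the limit along $\Kr$, $\Fut(\Xi,\Omega)$ is governed by $\parmu(S_+)=\parmu(S)$ (this is exactly what Lemma~\ref{lemma:weights} and the LeBrun--Simanca-type formulas \eqref{eq:intt}--\eqref{eq:intst} deliver, culminating in \eqref{eq:limom}). The genuine gap is the borderline case $\parmu(S)=0$, which your hypothesis allows. Your argument only controls the \emph{leading} term of $\Fut(\Xi,\Omega)$ near $\Kr$, and that term is a multiple of $\parmu(S_+)$; when $\parmu(S_+)=0$ it vanishes and the sign is decided by the next-order corrections, which your plan does not touch. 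This is not a removable technicality: Lemma~\ref{lemma:futs2} shows that when $\parmu(S_+)=0$ and the parabolic structure is nonempty, $\Fut(\Xi,\cdot)$ takes \emph{both} signs in every cone about $\Kr$, so no statement of the form ``every class close to $\Kr$ has $F_1\le 0$'' can be true; one must differentiate the Futaki invariant in the parameters $\tau^\pm=\Omega\cdot E_1^\pm$ at the orbifold class and then select a rational Kähler class in the correct region of the quadrant. When moreover the parabolic structure is empty, a separate argument is needed (Narasimhan--Seshadri: $\cM_0$ is a flat $\CP^1$-bundle, CSCK in every class, so the Futaki invariant vanishes identically, Lemma~\ref{lemma:futs3}). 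As written, your proof silently asserts the conclusion in the equality case without justification.

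Two smaller points. First, your sign bookkeeping is inconsistent with the conventions you invoke: Proposition~\ref{prop:don} as stated in the paper makes $F_1$ a \emph{positive} multiple of $\Fut(\Xi,\Omega)$ (the flip relative to Donaldson is already built into that statement), so the chain ``leading term a positive multiple of $-\parmu(S_+)$, hence $\Fut\ge 0$, hence $F_1\le 0$'' contains two compensating sign errors; in the paper's conventions the limit is $\bigl(1-\tfrac{\bar s^{\Omega}}{12\pi}\bigr)\parmu(S_+)$, of the \emph{same} sign as $\parmu(S_+)$, and $F_1<0$ follows directly when $\parmu(S)<0$. Second, the coefficient in front of $\parmu(S_+)$ is not automatically positive (``a positive constant depending on the base'' is not accurate): $1-\bar s^{\Omega}/(12\pi)$ can be negative when $\chiorb(\Sbar)>0$ and the base area is small, so one must first move far enough along the ray, as the paper does by taking $C$ large in $\Omega^{orb}_C=\orbic(\cO_{\cMbar_0}(1))+CF$ so that $\bar s^{\Omega^{orb}_C}\to 8\pi$ and the coefficient tends to $\tfrac13$, before perturbing into a rational class of $\Kc(\cMhat_0)$.
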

Since $\cXhat$ has no nontrivial holomorphic vector field and
$\cMhat_0$ does, the test configuration must be non-trivial. Thus we
get the following corollary  which proves  the statement
$(2)\Rightarrow (3)$ of 
Theorem \ref{theo:A}:
\begin{cor}
\label{cor:key}
  If $\cX\to\Sigma$ is not parabolically stable and $\cXhat$ has no
  nontrivial holomorphic vector fields, then $\cXhat$ is not basically K-stable.
\end{cor}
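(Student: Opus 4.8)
The plan is to deduce the corollary directly from Proposition~\ref{prop:futsign} together with Proposition~\ref{prop:don}, which converts the Donaldson--Futaki invariant of a regular test configuration into the analytic Futaki invariant, and with the fact that a test configuration whose central fibre carries nontrivial holomorphic vector fields while the generic fibre does not cannot be a product. First I would recall the hypothesis: if $\cX\to\Sigma$ is not parabolically stable, then by definition there is a holomorphic section $S$ with $\parmu(S)\leq 0$, so the standing assumptions of \S\ref{sec:fut} are in force and Proposition~\ref{prop:futsign} applies. Fix any open cone $U\subset H^2(\cXhat,\RR)$ containing $\Kr$ small enough that Proposition~\ref{prop:tc} holds; then Proposition~\ref{prop:futsign} produces a rational Kähler class $\Omega\in\Kc(\cXhat)\cap U$ such that the associated regular test configuration $\cP\to\cMhat\to\CC$ has Donaldson--Futaki invariant $F_1\leq 0$.

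The key point is then to rule out the degenerate case $F_1=0$ coming from a product configuration. Here I would invoke the two-sided nature of the construction: by Proposition~\ref{prop:tc} the central fibre $\cMhat_0$ is the iterated blowup of $\cM_0\simeq\PP(L_+\oplus L_-)$ along parabolic points lying on the sections $S_\pm$, and this carries the nontrivial $\CC^*$-action whose Euler vector field $\Xi$ is exactly the holomorphic vector field appearing in Proposition~\ref{prop:don}; in particular $\Xi\not\equiv 0$ on $\cMhat_0$. On the other hand, $\cXhat\simeq\cMhat_1$ has no nontrivial holomorphic vector fields by hypothesis. Since a product test configuration $\cM\simeq \cMhat_1\times\CC$ would force every fibre — in particular the central fibre — to be biholomorphic to $\cMhat_1$ and hence to carry no nontrivial holomorphic vector field, the configuration cannot be a product. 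Therefore the $K$-stability inequality for $(\cXhat,L_\Omega)$ requires $F_1>0$, and the existence of a rational Kähler class $\Omega\in\Kc(\cXhat)\cap U$ with $F_1\leq 0$ contradicts $K$-stability for that polarization.

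Finally, since $U$ was an arbitrary sufficiently small open cone about $\Kr$ and we have exhibited, inside $U\cap\Kc(\cXhat)$, a rational class that is not $K$-stable, no such cone can consist entirely of $K$-stable rational classes; by Definition~\ref{dfn:basic} this says precisely that $\cXhat$ is not basically $K$-stable, which is the assertion of the corollary. The only substantive input is Proposition~\ref{prop:futsign} — the sign computation of the Donaldson--Futaki invariant carried out in \S\ref{sec:sfut} — and I expect the one genuine subtlety in assembling the corollary to be the careful bookkeeping that the test configuration supplied by Proposition~\ref{prop:tc} is genuinely regular in the sense of \S\ref{sec:tc} (so that Proposition~\ref{prop:don} is applicable) and genuinely non-product (so that the vanishing case is excluded); both follow from the explicit $\CC^*$-equivariant description in Lemma~\ref{lemma:cstardefo} and Proposition~\ref{prop:tc}, but they must be stated explicitly for the deduction to be clean.
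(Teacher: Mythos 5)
Your proposal is correct and follows essentially the same route as the paper: invoke Proposition~\ref{prop:futsign} to get, in any admissible cone about $\Kr$, a rational polarization whose test configuration has $F_1\leq 0$, and rule out the product case by noting that the central fibre $\cMhat_0$ carries the nontrivial Euler vector field of the $\CC^*$-action while $\cXhat\simeq\cMhat_1$ has no nontrivial holomorphic vector fields. The extra bookkeeping you mention (regularity of the configuration, applicability of Proposition~\ref{prop:don}) is already absorbed into the proof of Proposition~\ref{prop:futsign} in the paper, so your argument matches theirs.
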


\begin{proof}[Proof of Proposition \ref{prop:futsign}]
Donaldson proved in~\cite{Don02} that for a regular test 
configuration, the Donaldson-Futaki invariant is actually given by the usual Futaki
invariant of the central fiber. 
This is the case for the  test configuration $\cP\to\cMhat\to\CC$ 
 Computing the Futaki invariant of its central fiber is the goal of
the rest of this section. In particular, the proposition follows from
the Lemmas \ref{lemma:futs1}, \ref{lemma:futs3}, \ref{lemma:futs2} and Remark~\ref{rmk:key}. 
\end{proof}

\subsection{Geometrically ruled surfaces with circle symmetry}

From Proposition~\ref{prop:tc},  we have 
 $\cM_0\simeq \PP(L_+\oplus L_-)$. In addition, this geometrically
ruled surface is endowed with a parabolic structure deduced from the
parabolic structure on $\cM_1$ as explained at \S\ref{sec:tc}. In more
concrete terms, we pass from a parabolic structure on $\cM_1$ to a
parabolic structure on $\cM_0$ as follows:
let  $x^1_j$ be a
parabolic point in $\cM_1\simeq \cX$ such that
$\pi_\Sigma(x^1_j)=y_j$. Then, there is a parabolic point $x_j^0\in\cM_0$ in
the fiber of $y_j\in \Sigma$, such that  $x^0_j\in
S_+$ if $x^1_j\in S$, and, $x^0_j\in S_-$ otherwise. Eventually, the parabolic
weight attached to $x_j^0$ is given by the weight of $x^1_j$.
The central fiber $\cMhat_0$ of the test configuration given by Proposition
\ref{prop:tc} is the iterated blowup of the parabolic ruled surface
$\cM_0\to \Sigma$. Similarly  to $\cXbar\to \Sbar$,
we obtain a complex geometrically ruled orbifold surface
$\cMbar_0\to\Sbar$ by contracting the $E^\pm_j$-curves
in $\cMhat_0$.

By construction $\cM_0$ is endowed with a $\CC^*$-action coming from
the $\CC^*$-action on the complex manifold $\cM$. In fact, this action
is determined by the following properties:
\begin{itemize}
\item the action is free on a dense open subset of $\cM_0$,
\item it preserves the fibers of the ruling $\cM_0\to\Sigma$,
\item  the 
sections $S_\pm$ are the fixed points of the action,
\item the points of $S_-$ are repulsive, and the points of $S_+$ are attractive.
\end{itemize}
  
As all the parabolic points of $\cM_0\to\Sigma$ belong to $S_-\cup
S_+$, 
the $\CC^*$-action lifts to 
 $\cMhat_0\to \cM_0$.  Let $\hat S_-$ and $\hat
S_+$ be the proper
transforms of $S_-$ and $S_+$ in $\cMhat_0$. 
Notice that the $\CC^*$-action also descends via the canonical
projection $\cM_0\to \cMbar_0$, 
since it must preserve holomorphic spheres of negative self-intersection.
\subsection{Cremona transformations}
\label{rk:parab}
As we noticed, any parabolic point $x\in
F=\pi_{\Sigma}^{-1}(y)\subset\cM_0$ belongs to $S_+$ or $S_-$.
Assume $x\in
  S_-$. Let $ \cM_0''\to\cM_0$ be the blowup at 
  $x$ and $\hat F\subset \cM_0''$ be the proper transform of $F$.
Since $\hat F$  has self-intersection $-1$ it can
  be contracted back to a point $x'$. The contraction is denoted $\cM_0''\to\cM_0'$. Such an operation (blowing up,
  then contracting) is called a \emph{Cremona transformation}. Notice that the
  proper transform
  $S_+'\subset \cM_0'$ of $S_+$ contains the point $x'$.

Furthermore, the ruled surface $\cM_0'\to \Sigma$ has a
  natural parabolic structure induced by the parabolic structure of
  $\cM_0\to\Sigma$ with the convention that $x$ has been replaced by
  $x'$ and the corresponding weight $\alpha$ is now replaced by
  $\alpha'=1-\alpha$. It is an easy exercise to show that the notions
  of parabolic stability are invariant under such Cremona
  transformation. In addition the iterated blowup encoded by either
  parabolic ruled surfaces are both $\cMhat_0$.

Therefore, we may assume that all the parabolic points of $\cM_0$
belong to $S_+$ 
after performing a finite number of Cremona transformations. The
condition of stability is unchanged provided the weights are modified
according to the above convention.

\subsection{Weights of the $\CC^*$-action along special fibers}
The fibers of
$\cMhat_0\to \Sigma$ are preserved by the $\CC^*$-action. 
Generic
fibers are identified to $\CP^1$ endowed with a $\CC^*$-action of
weight $1$, and the two
fixed points correspond to the intersections of the fiber with $\hat S_\pm$.
In contrast, blownup fibers have more complicated $\CC^*$-action.
We start from $\pi_\Sigma:\cM_0\to \Sigma$ and assume that there is only one
parabolic point $x$ for simplicity. By \S\ref{rk:parab} we may
also assume that $x\in S_+$. The fiber $F$ containing $x$ is
represented by the configuration of curves
\begin{equation}
\xymatrix{
{}\ar@{-}'[d]_{S_-}  
       '[dr]^{0}_{\underset F1} 
       '[r]_{S_+} & \\
*+[o][F]{} &  *{\bullet} 
}
\end{equation} 
Here, the black dot represents the point $x$ in the fiber
$F=\pi^{-1}_\Sigma(y)$ of self-intersection $0$.  The integer $1$ represents
the weight of the 
$\CC^*$-action induced on $F$.

Then we blowup the point $x$ and get a configuration
\begin{equation}
\label{eq:weightstart}
\xymatrix{
{}\ar@{-}[d]_{\hat S_-} & & {}\ar@{-}[d]^{\hat S_+} \\
*+[o][F]{}\ar@{-}[r]^{-1}_{\underset {\hat F}1} & *+[o][F-]{}
\ar@{-}[r]^{-1}_{\underset {\hat E}1} &  *+[o][F]{}
}
\end{equation}
In the above diagram, the integer $1$ represent the weights of the
induced $\CC^*$-actions on the proper transform $\hat F$ of $F$ and on
$\hat E$ the exceptional divisor of the blowup.
Using the same notation, we blowup the intersection of the $-1$ curve
and obtain
\begin{equation}
\xymatrix{
{}\ar@{-}[d]_{\hat S_-} & & & {}\ar@{-}[d]^{\hat S_+} \\
*+[o][F]{}\ar@{-}[r]^{-2}_1 &*+[o][F]{}\ar@{-}[r]^{-1}_2 & *+[o][F-]{}
\ar@{-}[r]^{-2}_1 &  *+[o][F]{}
}
\end{equation}

The we iterate our blowup procedure in order to get a diagram of the form
\begin{equation}
\label{eq:bupw}
\xymatrix{
\ar@{-}[d]_{\hat S_{-}}&&&&&&&&&&&\ar@{-}[d]^{\hat S_+} \\
 *+[o][F-]{}\ar@{-}[r]^{-e^-_1}_{w^-_1} & *+[o][F-]{}
\ar@{-}[r]^{ -e^-_{2}}_{w^-_2} &  *+[o][F-]{}
\ar@{--}[r] &  *+[o][F-]{}
\ar@{-}[r]^{-e^-_{k-1}}_{w_{k-1}^-} &  *+[o][F-]{}
\ar@{-}[r]^{-e^-_k}_{w^-_k} &  *+[o][F-]{}
\ar@{-}[r]^{-1}_{w} &  *+[o][F-]{}
\ar@{-}[r]^{-e^+_{l}}_{w^+_l} &  *+[o][F-]{}
\ar@{-}[r]^{-e ^+_{l-1}}_{w^+_{l-1}} &  *+[o][F-]{}
\ar@{--}[r] &  *+[o][F-]{}
\ar@{-}[r]^{-e^+_{2}}_{w^+_2} &  *+[o][F-]{}
\ar@{-}[r]^{ -e^+_{1}}_{w^+_1}&  *+[o][F-]{}
}
\end{equation}
The weight of the $\CC^*$-action induced on the $-1$-curve is computed
by induction, using the simple formula $w=w^-_k+w^+_l$.  We shall also
use the notation $E^\pm_j$ for the curve of self-intersection
$-e^\pm_j$ and $E_0$ for the $-1$-curve. 

Instead of starting with the configuration \eqref{eq:weightstart}, we
can formally replace the weights with the new configuration
\begin{equation*}
\label{eq:weightstart2}
\xymatrix{
{}\ar@{-}[d]_{\hat S_-} & & {}\ar@{-}[d]^{\hat S_+} \\
*+[o][F]{}\ar@{-}[r]^{-1}_{\underset {\hat F}0} & *+[o][F-]{}
\ar@{-}[r]^{-1}_{\underset {\hat E}1} &  *+[o][F]{}
}
\end{equation*}
Using the same induction as for $w^\pm_j$, we construct a weight
system 
$$v_1^-,
\cdots, v_k^-,v,v^+_l,\cdots, v_1^+.$$

By definition of the weights and the ajunction, we have
$$
F=wE_0+ \sum_{n=1}^kw_n^-E_n^- + \sum_{n=1}^lw_n^+E_n^+
$$
and
$$
\hat E=vE_0+ \sum_{n=1}^kv_n^-E_n^- + \sum_{n=1}^lv_n^+E_n^+,
$$
where $\hat E$ and $F$  denote the pullback the homology classes
to $\cMhat_0$.

We gather the relevant results in the following lemma:
\begin{lemma}
\label{lemma:weights}
Using the convention $w=w^-_{k+1}=w^+_{l+1}$, we have
  $$
\sum_{n=1}^k \frac 1{w^-_nw^-_{n+1}} = \alpha, \quad \sum_{n=1}^l \frac 1{w^+_nw^+_{n+1}} = 1-\alpha.
$$
using the notation $\alpha=p/q$,  the weights introduced above satisfy 
$$w=q,\quad v=p,\quad w^\pm_1=1,\quad v^+_1=1 \quad \mbox{ and } v^-_1=0.
$$ 
\end{lemma}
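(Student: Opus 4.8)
The plan is to deduce everything from the two homology identities for $F$ and $\hat E$ displayed just above the statement, together with elementary intersection theory on $\cMhat_0$, and then to run a telescoping (Casoratian) argument on the resulting linear recursions.

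\emph{Step 1: intersection numbers.} Since $F$ is the pull-back of the fibre class of $\cM_0\to\Sigma$ and $\hat E$ the pull-back of the exceptional class of the first blow-up, one has $F\cdot E_0 = F\cdot E_n^\pm = 0$ for all $n$ and $\hat E\cdot E_0 = \hat E\cdot E_n^\pm = 0$ for $n\ge 2$, while the projection formula gives $F\cdot\hat S_\pm = 1$, $\hat E\cdot\hat S_+ = 1$, $\hat E\cdot\hat S_- = 0$, $\hat E\cdot E_1^- = 1$ and $\hat E\cdot E_1^+ = \hat E^2 = -1$. Substituting the homology identities and pairing successively with $\hat S_\pm$, with $E_0$, and with each $E_n^\pm$, I would read off: $w_1^\pm = 1$; $v_1^- = 0$ and $v_1^+ = 1$; the $(-1)$-curve relation $w = w_k^- + w_l^+$; the recursions $e_n^\pm w_n^\pm = w_{n-1}^\pm + w_{n+1}^\pm$ (with the conventions $w_0^\pm:=0$ and $w_{k+1}^- := w =: w_{l+1}^+$); and likewise $e_n^\pm v_n^\pm = v_{n-1}^\pm + v_{n+1}^\pm$ for $n\ge 2$, supplemented by $v_2^- = 1$ and $v_2^+ = e_1^+ - 1$. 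This is the bookkeeping part; the only subtle point is the asymmetry of the two ends, which comes from $\hat E\cdot E_1^- = +1$ as opposed to $\hat E\cdot E_1^+ = \hat E^2 = -1$.

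\emph{Step 2: identification with continued-fraction convergents.} The sequence $(w_0^-,w_1^-,\dots,w_k^-,w)$ solves the determinant-one recursion $c_{n+1} = e_n^- c_n - c_{n-1}$ with $c_0 = 0$, $c_1 = 1$; comparing it with the numerator--denominator pair $(P_n^-,Q_n^-)$ of the continued fraction \eqref{e1.844} for $\alpha$ (so that $1/\alpha = q/p$ and $P_0^- = 1,\ P_{-1}^- = 0,\ Q_0^- = 0,\ Q_{-1}^- = -1$), an immediate induction gives $w_{n+1}^- = P_n^-$ and, using $v_1^- = 0$, $v_2^- = 1$, also $v_{n+1}^- = Q_n^-$. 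Since the last convergent is automatically in lowest terms (the invariant $P_nQ_{n-1} - P_{n-1}Q_n = -1$) and $\gcd(p,q) = 1$, this forces $w = P_k^- = q$ and $v = Q_k^- = p$; the analogous computation on the $+$ side with \eqref{e20.844} reconfirms $w = q$. Together with Step~1 this settles $w = q$, $v = p$, $w_1^\pm = 1$, $v_1^- = 0$, $v_1^+ = 1$.

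\emph{Step 3: the continued-fraction sums.} As $(w_n^-)$ and $(v_n^-)$ satisfy the same determinant-one recursion, $w_n^- v_{n-1}^- - w_{n-1}^- v_n^-$ is independent of $n$, and evaluating it at $n = 2$ gives $-1$; hence $\frac{v_n^-}{w_n^-} - \frac{v_{n-1}^-}{w_{n-1}^-} = \frac{1}{w_{n-1}^- w_n^-}$, and summing over $n = 2,\dots,k+1$ telescopes to $\sum_{n=1}^k \frac{1}{w_n^- w_{n+1}^-} = \frac{v}{w} - \frac{v_1^-}{w_1^-} = \frac{p}{q} = \alpha$. On the $+$ side the same Casoratian equals $+1$ (the sign flip of Step~1), so the telescoping runs the other way and yields $\sum_{n=1}^l \frac{1}{w_n^+ w_{n+1}^+} = \frac{v_1^+}{w_1^+} - \frac{v}{w} = 1 - \frac{p}{q} = 1-\alpha$. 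The main obstacle is organisational rather than conceptual: keeping the index conventions and the two boundary values $v_2^\pm$ straight, and lining up the weight recursions with the standard convergent recursions so that the coprimality $\gcd(v,w) = 1$ can be invoked. Once that is done all three assertions of the Lemma follow.
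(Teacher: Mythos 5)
Your proof is correct: the intersection numbers in Step 1 check out (in particular $\hat E\cdot E_1^-=1$, $\hat E\cdot E_1^+=\hat E^2=-1$, $\hat E\cdot\hat S_-=0$, $\hat E\cdot\hat S_+=1$, and $F$ pairs to zero with every curve of the chain), pairing the displayed identities $F=wE_0+\sum w_n^\pm E_n^\pm$ and $\hat E=vE_0+\sum v_n^\pm E_n^\pm$ against $\hat S_\pm$, $E_0$ and the $E_n^\pm$ does yield exactly the initial values $w_1^\pm=1$, $v_1^-=0$, $v_1^+=1$, $v_2^-=1$, $v_2^+=e_1^+-1$ together with the three-term recursions, and the constant-Casoratian telescoping then gives both sums with the correct signs (I checked the edge cases $k=1$ and $l=1$, where $v_{k+1}^-=v$ and $v_{l+1}^+=v$ make your boundary conventions consistent). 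This is, however, a genuinely different route from the paper, which gives no written argument at all: it declares the lemma a ``straightforward induction'', i.e.\ an induction over the successive blowups in which one tracks simultaneously the self-intersections, the two weight systems and the partial continued-fraction expansions of $\alpha$ and $1-\alpha$ at each intermediate stage. You instead work only on the final surface $\cMhat_0$, take the two total-transform identities displayed just before the lemma as input, and reduce everything to standard Hirzebruch--Jung convergent theory ($w_{n+1}^-=P_n^-$, $v_{n+1}^-=Q_n^-$) plus the determinant-one invariant. What your approach buys is a cleaner, less bookkeeping-heavy proof in which the coprimality $\gcd(v,w)=1$ and both sums come from the same invariant; what it costs is that it leans on the displayed identities (which the paper asserts ``by definition of the weights and the adjunction'') rather than re-deriving them from the blowup procedure, and it implicitly uses positivity of the weights/convergents (immediate from $e_n^\pm\ge 2$) when concluding $P_k^-=q$, $Q_k^-=p$ from equality of fractions in lowest terms; neither point is a gap, but both deserve a sentence in a written-out version.
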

\begin{proof}
The proof by induction is straightforward and left as an exercice for the interested reader.
\end{proof}

\subsection{Computation of the Futaki invariant}
 LeBrun \emph{et al} \cite{LebSin93,LebSim94} computed the Futaki
invariant of a ruled surface endowed with a semi-free $\CC^*$-action.
We are going to point out what should be modified for a general action. The reader is strongly advised
to refer to \cite[Section 3.3]{LebSim94} as we are following closely
their notations.

Let $\Xi$ be the $(1,0)$-holomorphic vector field  on $\cMhat_0$ defined
as the Euler vector field that generates the $\CC^*$-action. We define
$\xi= -2\Im \Xi$ as the (real) vector field that spans the underlying
circle action. 
Let $\omega$ be a circle-invariant Kähler form on $\cMhat_0$ with
Kähler class $\Omega=[\omega]$.
Since $\xi$ is a Killing field vanishing at some point, it is automatically
Hamiltonian (cf. \cite{LebSim94}). In other words, there exists a
smooth Hamiltonian function $t:\cMhat_0\to \RR$ such that
$dt=-\iota_{\xi}\omega$. Then $t$ admits a minimum along
$\hat S_-$ and a maximum along $\hat S_+$.
Up to adding a suitable constant, we may assume that
$t:\cMhat_0\to [-a,+a]$ is a surjective map for some $a>0$ and  $\hat
S_\pm=t^{-1}(\pm a)$.

Again, we are assuming that $\cM_0$ has only one parabolic point $x$
 to keep notations simple. Up to a Cremona tranformation, we may even
 assume that $x\in S_+$ (cf. \S\ref{rk:parab}).  The set of critical points of the function $t$ on $\cMhat_0$ consists of
the divisors $\hat S_\pm$ where $t$ is extremal,  and isolated  sadle
points. The latter are given by the intersections of the
$E^\pm_j$ and $E_0$-curves. These
points represented by the hollow dots in Diagram~\eqref{eq:bupw}. It
will be convenient to label them
$f_0,\cdots,f_k,f_{k+1},\cdots,f_{k+l+1}$ from the left to the
right. In the same spirit we shall use a notation $w_1=w_1^-, \cdots,
w_k=w_k^-,w_{k+1}=w,w_{k+1}=w^+_l,\cdots, w_{k+l+1}=w^+_1$.

 Let $\cY= \cMhat_0\setminus (S_+\cup S_-\cup \{f_j\})$, the regular locus of
$t$. By definition $\cY$ is a Seifert manifold. Any
point $z\in \cY$ has trivial stabilizer, unless $z$ belongs to $E_0$
or $E^\pm_j$ where the stabilizer is the cyclic group respectively of order
$w$ and $w^\pm_j$. Hence the quotient $N=\cY/S^1$ has an orbifold
structure
 and $\varpi:\cY\to N$ is an orbifold circle bundle. This is the main
 difference with the case of a semi-free action, where $\cY\to N$
 is a
 smooth circle bundle. 

The function $t$ is
invariant under the circle action, hence the fibers $\cY_c$ of
$t:\cY\to (-a,a)$ are endowed with a circle action and the map
descends to $t:N\to (-a,a)$. 
If $c$ is a regular value of $t$, 
$N_c= t^{-1}(c)\subset Y$ is a compact orbifold.
Moreover, $N_c$  has a natural Kähler structure since
it is a Kähler moment map reduction of $(\cMhat_0,\omega)$ by 
the Hamiltonian action of the circle. Furthermore $N_c$ is
isomorphic to $N_{d}$ if there are no critical value in the
interval $[c,d]$.

Let $t_j=t(f_j)$ be the $t$-coordinate of the fixed point $f_j$. 
Then the following facts hold, by definition:
\begin{enumerate}
\item If $t_{j}<c<t_{j+1}$ for some $0\leq j\leq k+l$, the Riemann surface $N_c$
is isomorphic to $\Sigma$, where the marked point $y$ of the parabolic
structure has been replaced by an orbifold point of order $w_{j+1}$. In
particular $N_c\simeq \Sigma$ is $j=0$ or $k+l$ and $N_c\simeq \Sbar$
if $j=k$. 
\item
Let $S_j\subset N$ be a small sphere (with orbifold singularities) centered at a point $\varpi(f_j)$, for
$1\leq j\leq k+l$. Then the orbicircle bundle $\cY|_{S_j}\to S_j$ has
orbifold degree 
\begin{equation}
 c_1(\cY)\cdot [S_j] = \frac 1{w_jw_{j+1}}.
\end{equation}
 This readily seen as $\cY\to S_j$ admits a $w_jw_{j+1}$-fold ramified
 cover by the Hopf fibration $S^3\to S^2$. This is also an
essential difference with the case of a semi-free circle action~\cite[top of the page 315]{LebSim94}.
\end{enumerate}

It is also convenient to use a rescaled Kähler class in comparison with
\S\ref{sec:bkahl}. Here we shall assume that the Kähler class $\Omega$
satisfies the identity $\Omega\cdot F =1$.
Adapting carefully the computation of \cite[p. 318-319]{LebSim94} to this orbifold
context, and relying on facts (1) and (2) above, we get the identity
\begin{equation}
  \label{eq:intt}
  \int_{\cMhat_0} t \;d\mu = \frac 1{96\pi}\left ( \hat S_-^2 - \hat S_+^2 + 6
    \Omega\cdot \left ( \hat S_+ - \hat S_-\right ) - 64\pi^3 \sum_{j=1}^{k+l}\frac
    {t_j^3}{w_jw_{j+1}}\right )
\end{equation}
where $d\mu$ is the volume form of the Kähler metric $\omega$.
We also have the modified formula
\begin{equation}
  \label{eq:intst}
\int st \; d\mu =\Omega\cdot\left (\hat S_+-\hat S_-\right ) +
4\pi^2\sum_{j=1}^{k+l}(w_j^{-1}-1)(t_{j+1}^2-t_j^2) 
\end{equation}
where $s$ is the scalar curvature of the metric.  By definition of the
Futaki invariant $\Fut(\Xi,\Omega)=\int (\bar s^\Omega-s)t\;d\mu$ and we end up with the
formula 
\begin{align*}
\Fut(\Xi,\Omega) =&  \Omega\cdot (\hat S_--\hat S_+) -
4\pi^2\sum_{j=1}^{k+l}(w_j^{-1}-1)(t_{j+1}^2-t_j^2) \\
 + & \frac {\bar s^\Omega}{96\pi}\left ( \hat S_-^2 - \hat S_+^2 + 6
    \Omega \cdot ( \hat S_+ - \hat S_-) - 64\pi^3 \sum_{j=1}^{k+l}\frac
    {t_j^3}{w_jw_{j+1}}\right )  
\end{align*}
where
$$
\bar s^\Omega = \int s \;d\mu = 8\pi \frac{c_1(\cMhat_0)\cdot \Omega}{\Omega^2}.
$$

\subsection{A computation in the degenerate case}
Notice that if we let $\Omega$  degenerates toward (the pullback of)
an orbifold Kähler 
class $\Omega^{orb}$ on $\cMbar_0$,
we have $t_1=\cdots =t_k=-a$ and $t_{k+1}=\cdots=t_{k+l}=a$. Therefore,
using Lemma \ref{lemma:weights} and the fact that $4\pi a=\Omega\cdot
F=1$ (cf. \cite[bottom of p. 315]{LebSim94}), we obtain
$$
\lim_{\Omega\to\Omega^{orb}}  \int t\; d\mu = \frac 1{96\pi}\left ( \hat S_-^2 - \hat S_+^2 + 6
    \Omega\cdot ( \hat S_+ - \hat S_-) + \alpha - (1-\alpha) \right )
$$
On the other hand $[\hat S_+]^2=[S_+]^2-1$ and $[\hat S_-]^2=[S_-]^2$ since the first blowup occurred
at $x\in S_+$. It follows that 
$$[\hat S_-]^2 +\alpha =\parmu (S_-), \quad \mbox{ and }\quad  \hat S_+^2+1-\alpha =\parmu(S_+).$$
Finally $\Omega\cdot (\hat S_+-\hat S_-)= \Omega^{orb}\cdot (\bar S_+-\bar
S_-)$ for a orbifold Kähler class, where $\bar S_\pm=\pi_{\cMbar_0}(\hat
S_\pm)$. The holomorphic sections $\bar S_\pm$ of $\cMbar_0\to\Sbar$
corresponds to orbifold line 
bundle  $\bar L_\pm\to \Sbar$. In this context, it is well known that
(cf. for instance \cite{FurSte92})
$$\orbic (\cO_{\cMbar_0}(1))\cdot 
\bar S_\pm = \orbideg \bar L_\pm = \pardeg L_\pm,$$
where $\orbideg$ is the natural notion of degree for an orbifold line bundle.
Hence   $\Omega\cdot (\hat S_+-\hat S_-)= \pardeg L_+ -
\pardeg L_-= (\pardeg L_+ +
\pardeg L_-)- 2\pardeg L_-=\parmu (S_-)=-\parmu(S_+)$

In conclusion, we have the following result:
\begin{lemma}
Let $\Omega\in\Kc(\cMhat_0)$ and $\Omega^{orb}\in \Kc(\cMbar_0)$
considered as a class on $\cMhat_0$ as well. Then
$$\lim_{\Omega\to\Omega^{orb}}\int_{\cMhat_0} t \;d\mu = - \frac {\parmu (S_+)}{12\pi},
\quad \lim_{\Omega\to\Omega^{orb}}\int_{\cMhat_0} st \; d\mu = -\parmu(S_+).
$$
\end{lemma}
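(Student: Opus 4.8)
The plan is to read both limits off the two master formulas \eqref{eq:intt} and \eqref{eq:intst}, which already present $\int_{\cMhat_0}t\,d\mu$ and $\int_{\cMhat_0}st\,d\mu$ as explicit polynomials in the intersection data $\hat S_\pm^2$, $\Omega\cdot\hat S_\pm$, $\Omega\cdot F=1$ and in the pairs $(t_j,w_j)$ attached to the saddle points of the moment map $t$. Since these expressions are polynomial in $\Omega$ — through the areas $\Omega\cdot\hat S_\pm$ and $\Omega\cdot E_j^\pm$, which in turn pin down the critical values $t_j$ — the only input not already contained in those polynomial identities is the limiting behaviour of the $t_j$ as $\Omega\to\Omega^{orb}$: when the areas $\Omega\cdot E_j^\pm$ shrink to $0$, the entire string $E_1^-,\dots,E_k^-$ collapses onto a point of $\hat S_-$ and the entire string $E_1^+,\dots,E_l^+$ onto a point of $\hat S_+$, while the $(-1)$-curve $E_0$ keeps positive area and joins $\hat S_-$ to $\hat S_+$; hence $t_1=\dots=t_k\to-a$ and $t_{k+1}=\dots=t_{k+l}\to a$, where $\hat S_\pm=t^{-1}(\pm a)$ and $4\pi a=\Omega\cdot F=1$.

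First I would treat $\int st\,d\mu$, which is the quicker of the two. In \eqref{eq:intst} every $t_j^2$ tends to $a^2$, so each $t_{j+1}^2-t_j^2\to0$ and the whole sum $4\pi^2\sum(w_j^{-1}-1)(t_{j+1}^2-t_j^2)$ drops out, leaving $\lim\int st\,d\mu=\Omega\cdot(\hat S_+-\hat S_-)$. To identify this number I would use that the images $\bar S_\pm$ of $\hat S_\pm$ in $\cMbar_0$ correspond to orbifold line bundles $\bar L_\pm\to\Sbar$ with $\orbic(\cO_{\cMbar_0}(1))\cdot\bar S_\pm=\orbideg\bar L_\pm=\pardeg L_\pm$, so that in the limit $\Omega\cdot(\hat S_+-\hat S_-)=\pardeg L_+-\pardeg L_-=\parmu(S_-)=-\parmu(S_+)$, the last equality by additivity of the parabolic degree ($\pardeg L_++\pardeg L_-=\pardeg E$). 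This is the claimed value $-\parmu(S_+)$.

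For $\int t\,d\mu$ the term $6\,\Omega\cdot(\hat S_+-\hat S_-)$ is handled exactly as above, and the only remaining non-topological piece of \eqref{eq:intt} is $-64\pi^3\sum_j t_j^3/(w_jw_{j+1})$. Splitting this sum according to whether the saddle point sits on the $E^-$-string (where $t_j\to-a$) or the $E^+$-string (where $t_j\to a$), Lemma~\ref{lemma:weights} gives $\sum 1/(w_jw_{j+1})=\alpha$ on the first string and $=1-\alpha$ on the second, so the sum tends to $a^3\big((1-\alpha)-\alpha\big)$; since $64\pi^3a^3=1$ this contributes exactly $\alpha-(1-\alpha)$ to the bracket in \eqref{eq:intt}. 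The remaining step is to combine this with $[\hat S_+]^2=[S_+]^2-1$ and $[\hat S_-]^2=[S_-]^2$ (the first blowup taking place at $x\in S_+$ after the Cremona reduction of \S\ref{rk:parab}), together with $[\hat S_-]^2+\alpha=\parmu(S_-)$ and $[\hat S_+]^2+1-\alpha=\parmu(S_+)$: the bracket becomes $\big(\parmu(S_-)-\parmu(S_+)\big)+6\parmu(S_-)=7\parmu(S_-)-\parmu(S_+)=-8\parmu(S_+)$, and therefore $\lim\int t\,d\mu=-8\parmu(S_+)/(96\pi)=-\parmu(S_+)/(12\pi)$. The case of several parabolic points follows by running the same splitting over each string.

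The step I expect to be the main obstacle is making the passage to the limit fully rigorous: one must check that the moment-map values $t_j$ depend continuously on $\Omega$ up to the degenerate class $\Omega^{orb}$ and that they degenerate with exactly the pattern $t_1=\dots=t_k=-a$, $t_{k+1}=\dots=t_{k+l}=a$ — equivalently, that as $\Omega\cdot E_j^\pm\to0$ the exceptional strings collapse onto the correct section while $E_0$ survives. Once that is granted and the saddle-point combinatorics is matched with the two continued-fraction sums of Lemma~\ref{lemma:weights}, everything else is the elementary algebra performed above.
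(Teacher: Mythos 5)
Your proposal is correct and follows essentially the same route as the paper: pass to the limit in the two formulas \eqref{eq:intt} and \eqref{eq:intst} using the degeneration pattern $t_1=\cdots=t_k\to -a$, $t_{k+1}=\cdots=t_{k+l}\to a$ with $4\pi a=\Omega\cdot F=1$, evaluate the saddle-point sums via Lemma \ref{lemma:weights}, and convert $\hat S_\pm^2$ and $\Omega\cdot(\hat S_+-\hat S_-)$ into parabolic slopes via $[\hat S_+]^2=[S_+]^2-1$, $[\hat S_-]^2=[S_-]^2$ and $\orbic(\cO_{\cMbar_0}(1))\cdot\bar S_\pm=\pardeg L_\pm$, yielding $-8\parmu(S_+)/(96\pi)$ and $-\parmu(S_+)$ exactly as in the text. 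The continuity/degeneration point you flag as the remaining obstacle is simply asserted in the paper as well, so your write-up is, if anything, more explicit on that step.
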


\subsection{Sign of the Futaki invariant}
\label{sec:sfut}
We deduce the following lemma, which will be crucial for the proof of
Proposition~\ref{prop:futsign}.
\begin{lemma}
\label{lemma:futs1}
Suppose that $\parmu(S_+)\neq 0$.
  There exists a sufficiently small open cone
  $U\subset H^2(\cMhat_0,\RR)$ containing the ray $\Kr$, such that
  for every Kähler class $\Omega\in  U\cap \Kc(\cMhat_0)$, the Futaki invariant
  $\Fut(\Xi,\Omega)$ does not vanish and has the same sign as $\parmu(S_+)$.
\end{lemma}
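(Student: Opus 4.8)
The plan is to leverage the explicit formula for $\Fut(\Xi,\Omega)$ derived in the previous subsection together with the degenerate-limit computation. The key observation is that $\Fut(\Xi,\cdot)$ extends continuously to the closed cone $\overline{U}$ (or at least to a neighbourhood of $\Kr$ in the closure of $\Kc(\cMhat_0)$), where the boundary ray $\Kr$ corresponds to the degenerate classes $\Omega^{orb}$ pulled back from $\cMbar_0$. So the strategy is: \textbf{(i)} show $\Fut(\Xi,\cdot)$ is continuous up to the relevant boundary; \textbf{(ii)} compute its value on $\Kr$ using the degenerate limit lemma; \textbf{(iii)} check this boundary value is nonzero with sign $\parmu(S_+)$; \textbf{(iv)} conclude by continuity that the sign and non-vanishing persist on a small open cone $U$ around $\Kr$.

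For step (ii), I would substitute the two limits from the preceding lemma, namely $\int t\,d\mu \to -\parmu(S_+)/(12\pi)$ and $\int st\,d\mu\to -\parmu(S_+)$, into $\Fut(\Xi,\Omega)=\int(\bar s^\Omega - s)t\,d\mu = \bar s^\Omega\int t\,d\mu - \int st\,d\mu$. This gives, in the limit,
\begin{equation*}
\lim_{\Omega\to\Omega^{orb}}\Fut(\Xi,\Omega) = -\frac{\bar s^{\Omega^{orb}}}{12\pi}\,\parmu(S_+) + \parmu(S_+) = \parmu(S_+)\left(1 - \frac{\bar s^{\Omega^{orb}}}{12\pi}\right).
\end{equation*}
The point is then to show the factor $1 - \bar s^{\Omega^{orb}}/(12\pi)$ is \emph{strictly positive} (it is manifestly nonzero whenever it is positive). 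Recall $\bar s^{\Omega^{orb}} = 8\pi\, c_1(\cMbar_0)\cdot\Omega^{orb}/(\Omega^{orb})^2$. As $\Omega^{orb}=\bar\pi^*\Omega_{\Sbar}+c\cdot\orbic(\cO_{\cMbar_0}(1))$ approaches the boundary ray $\Kr$ (i.e. $c\to 0$, or rather we take $\Omega_{\Sbar}$ large), the scalar curvature average $\bar s^{\Omega^{orb}}$ is dominated by the base contribution and tends to $0$ — indeed for $c$ small, $(\Omega^{orb})^2 \approx 2c\,(\bar\pi^*\Omega_{\Sbar}\cdot\orbic(\cO(1)))$ while $c_1(\cMbar_0)\cdot\Omega^{orb}$ stays bounded, so the ratio goes to $0$. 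Hence $1-\bar s^{\Omega^{orb}}/(12\pi)\to 1 > 0$ for classes sufficiently close to $\Kr$.

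For step (iv), once the boundary value is bounded away from zero with a definite sign, continuity of $\Fut(\Xi,\cdot)$ — which follows because all the ingredients ($\hat S_\pm^2$, $\Omega\cdot\hat S_\pm$, the $t_j$ as functions of $\Omega$ via the moment-map normalization $\Omega\cdot F = 1$, and $\bar s^\Omega$) depend continuously, indeed real-analytically, on $\Omega$ in $\Kc(\cMhat_0)$ up to the boundary — yields an open cone $U\supset\Kr$ on which $\Fut(\Xi,\Omega)\neq 0$ and $\mathrm{sign}\,\Fut(\Xi,\Omega) = \mathrm{sign}\,\parmu(S_+)$.

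The main obstacle I anticipate is \textbf{step (i)}: making precise the sense in which $\Fut(\Xi,\cdot)$ is continuous as $\Omega$ degenerates to a pullback class from $\cMbar_0$. The subtlety is that the $-1$-curve and the $E^\pm_j$-curves get contracted in the limit, so the Kähler form genuinely degenerates and the moment-map data $(t_j)$ collapse (several $t_j$ merge to $\pm a$). One must check that the combination appearing in the Futaki formula nonetheless extends continuously — this is exactly what the degenerate-case lemma accomplishes for $\int t\,d\mu$ and $\int st\,d\mu$ separately, so the cleanest route is to phrase everything in terms of those two integrals rather than the raw formula, invoking the preceding lemma as a black box. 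A secondary point requiring care is confirming that $\parmu(S_+)\neq 0$ (hypothesis of the lemma) indeed forces the limiting Futaki value to be nonzero, which is immediate from the displayed identity provided the positivity of $1-\bar s^{\Omega^{orb}}/(12\pi)$ is established as above.
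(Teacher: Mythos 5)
Your overall plan --- substitute the degenerate-limit values of $\int t\,d\mu$ and $\int st\,d\mu$ into $\Fut(\Xi,\Omega)=\bar s^\Omega\int t\,d\mu-\int st\,d\mu$, show the resulting coefficient $1-\bar s^{\Omega^{orb}}/(12\pi)$ in front of $\parmu(S_+)$ is positive near $\Kr$, and conclude by continuity --- is exactly the paper's strategy. However, your step (iii), which is the crux, rests on a miscalculation. All the formulas you are quoting (the expressions for $\int t\,d\mu$, $\int st\,d\mu$ and the degenerate-limit lemma) are derived under the normalization $\Omega\cdot F=1$, whereas your parametrization $\Omega^{orb}=\bar\pi^*\Omega_{\Sbar}+c\,\orbic(\cO_{\cMbar_0}(1))$ with $c\to 0$ has fiber area $c$, so those formulas do not apply as stated. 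Worse, your computation of $\bar s^{\Omega^{orb}}$ in that parametrization is backwards: $c_1(\cMbar_0)\cdot\Omega^{orb}$ stays bounded \emph{away from zero} (it tends to $2\,\Omega_{\Sbar}\cdot[\Sbar]>0$) while $(\Omega^{orb})^2\to 0$, so the ratio, hence $\bar s^{\Omega^{orb}}$, tends to $+\infty$, not to $0$. Plugged naively into your displayed identity this would even give a coefficient tending to $-\infty$, i.e.\ the opposite sign --- the symptom of mixing incompatible normalizations, since $\bar s$, $\int t\,d\mu$ and $\int st\,d\mu$ all rescale under $\Omega\mapsto\lambda\Omega$ (only the sign of $\Fut$ is scale-invariant).

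The correct way to approach $\Kr$ inside the normalized slice $\Omega\cdot F=1$ --- and this is what the paper does --- is to take $\Omega^{orb}_C=\orbic(\cO_{\cMbar_0}(1))+CF$ and let $C\to+\infty$ (base huge, fiber area fixed), rather than shrinking the fiber. Using the orbifold adjunction-type formula $\orbic(K_{\cMbar_0})=-2\,\orbic(\cO_{\cMbar_0}(1))+(\pardeg(E)-\chiorb(\Sbar))F$ one gets $\bar s^{\Omega^{orb}_C}=8\pi\,\frac{\pardeg(E)+\chiorb(\Sbar)+2C}{\pardeg(E)+2C}\to 8\pi$, so the limiting coefficient is $1-\tfrac{8\pi}{12\pi}=\tfrac13>0$, not $1$. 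The sign conclusion of the lemma survives, and your continuity steps (i) and (iv) are consistent with the paper's two-step limit (first $\Omega\to\Omega^{orb}_C$, then $C\to\infty$), but as written your justification of the essential positivity is incorrect and needs to be replaced by this normalized computation.
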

\begin{proof}
  We start with an orbifold Kähler class $\Omega^{orb}_C=\orbic (\cO_{\cMbar_0}(1))+CF$ on
  $\cMbar_0$, where $C>0$ is chosen very large.
We use a generalization of the classical result for
smooth geometrically ruled surfaces:
$$\orbic (K_{\cMbar_0})= -2\orbic (\cO_{\cMbar_0}(1)) +
(\pardeg (E) -
\chiorb(\Sbar)) F,$$
where $K_{\cMbar_0}$ is the (orbifold) canonical line bundle of the
orbifold $\cMbar_0$  and $\chiorb(\Sbar)$ is the orbifold Euler
characteristic given by 
$$
\chi^{orb}(\Sbar)=\chi(\Sbar) + \sum_{j=1}^m (\frac 1{q_j}-1).
$$

It follows that  $\bar s^{\Omega^{orb}_C} = 8\pi \frac {\pardeg(E)+\chiorb(\Sbar)+2C}{
  \pardeg(E)+2C}$. In particular, we see that
$$
\lim_{C\to +\infty} \bar s^{\Omega^{orb}_C} = 8\pi.
$$

Using the fact that
$$
\lim_{\Omega\to \Omega_c^{orb}}s^{\Omega}_0 =
s^{\Omega^{orb}_C}_0\quad\mbox {and}\quad 
\lim_{\Omega\to \Omega_C^{orb}} \Omega \cdot \hat S_\pm = 
\Omega^{orb}_C\cdot \bar S_\pm.
$$
and that the corresponding values of $t_j$ converge in the
following way
$$  t_j\to -a \mbox{ for $j\leq k$, and } \quad
t_j\to a  \mbox{ for $j\geq k+1$}
$$
as $\Omega\to\Omega^{orb}$,
we see deduce that

\begin{equation}
\label{eq:limom}
\lim_{\Omega \to \Omega^{orb}_C}\Fut( \Xi,\Omega)= \left (1 - \frac
{\bar s^{\Omega^{orb}_C}}{12\pi} \right ) \parmu(S_+).   
\end{equation}
For $C$ sufficiently large, the coefficient in front of
$\parmu(S_+)$ is positive since
$$
\lim_{C\to +\infty}\left ( 1 - \frac
{\bar s^{\Omega^{orb}_C}}{12\pi}\right ) = \frac 13
$$
 and the lemma follows.
\end{proof}

Eventually, we deal with the case where the section has vanishing
slope.
The case of a trivial parabolic structure is slightly different and
must be treated separately.

\begin{lemma}
\label{lemma:futs3}
  With the above notations, suppose that $\parmu (S_+)=0$ and that the
  parabolic structure of $\cM_0\to \Sigma$ is empty. 
Then 
the Futaki invariant  $\Fut(\Xi,\cdot)$  vanishes for every Kähler
class on $\cMhat_0=\cM_0$.
\end{lemma}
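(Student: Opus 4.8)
The plan is to exploit the fact that an empty parabolic structure means \emph{no blowups occur}: one has $\cMhat_0=\cM_0=\PP(L_+\oplus L_-)$, an ordinary smooth geometrically ruled surface, the $\CC^*$-action is the semi-free one of LeBrun--Simanca \cite{LebSim94}, there are no saddle points $f_j$, and $\hat S_\pm=S_\pm$. Consequently the sums $\sum_{j=1}^{k+l}$ in \eqref{eq:intt} and \eqref{eq:intst} are empty, so for any circle-invariant Kähler metric $\omega$ on $\cM_0$ whose class is normalized by $\Omega\cdot F=1$ one gets
\[
\int_{\cM_0} t\,d\mu=\tfrac1{96\pi}\bigl(S_-^2-S_+^2+6\,\Omega\cdot(S_+-S_-)\bigr),\qquad \int_{\cM_0} st\,d\mu=\Omega\cdot(S_+-S_-).
\]

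The next step is to compute the two intersection numbers. Since $S_+$ and $S_-$ are disjoint sections of $\cM_0\to\Sigma$, each meeting a fibre in one point, the divisor $S_+-S_-$ restricts trivially to every fibre, hence equals a pullback $\pi_\Sigma^*D$ of a divisor class $D$ on $\Sigma$. Then $S_+^2-S_-^2=(S_+-S_-)\cdot(S_++S_-)=2\deg D$, while $(S_++S_-)^2=(\pi_\Sigma^*D)^2+4\,S_+\cdot S_-=0$ gives $S_-^2=-S_+^2$. By Remark~\ref{rmk:key} and the triviality of the parabolic structure, $S_+^2=\parmu(S_+)=0$, so $S_+^2=S_-^2=0$ and therefore $\deg D=0$; thus $\pi_\Sigma^*D$ is numerically trivial and $\Omega\cdot(S_+-S_-)=0$ for \emph{every} Kähler class $\Omega$ on $\cM_0$ (every such class has a circle-invariant representative by averaging, and rescaling to $\Omega\cdot F=1$ does not affect the vanishing of the Futaki invariant).

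Finally, substituting $S_+^2=S_-^2$ and $\Omega\cdot(S_+-S_-)=0$ into $\Fut(\Xi,\Omega)=\int_{\cM_0}(\bar s^\Omega-s)t\,d\mu=\bar s^\Omega\int t\,d\mu-\int st\,d\mu$ yields
\[
\Fut(\Xi,\Omega)=\Bigl(\tfrac{\bar s^\Omega}{16\pi}-1\Bigr)\,\Omega\cdot(S_+-S_-)=0,
\]
which is the claim. The only genuinely delicate point is the vanishing $\Omega\cdot(S_+-S_-)=0$: this is exactly where the hypothesis $\parmu(S_+)=0$ is used, via $\deg D=0$. (As a conceptual check: $\deg L_+=\deg L_-$ makes $L_+\oplus L_-$ a polystable bundle, so $\PP(L_+\oplus L_-)$ admits a CSCK metric in every Kähler class by Apostolov--T{\o}nnesen-Friedman~\cite{ApoTon06}, and the Futaki invariant of a CSCK metric vanishes — which the above computation recovers by hand.)
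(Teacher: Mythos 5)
Your proof is correct, but it takes a genuinely different route from the paper. The paper's own argument is structural: with $S_\pm$ disjoint, of vanishing slope, and no parabolic points, $L_+\oplus L_-$ is polystable in the ordinary Mumford sense, so by the Narasimhan--Seshadri theorem $\cM_0\to\Sigma$ is a flat projective bundle; local products of constant-curvature metrics then give a CSCK representative in \emph{every} Kähler class, and the Futaki invariant of a class containing a CSCK metric vanishes. You instead argue by direct computation: with an empty parabolic structure the action is semi-free, the sums in \eqref{eq:intt}--\eqref{eq:intst} are empty (this is exactly the original LeBrun--Simanca setting, so invoking those formulas here is legitimate), and the hypothesis $\parmu(S_+)=S_+^2=0$ forces $S_+^2=S_-^2=0$ and $[S_+]=[S_-]$ in $H^2(\cM_0,\RR)$ (since $S_+-S_-$ is a pullback from $\Sigma$ of degree $\tfrac12(S_+^2-S_-^2)=0$), so both $\int t\,d\mu$ and $\int st\,d\mu$ vanish identically and hence so does $\Fut(\Xi,\cdot)$. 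Your intersection-theoretic steps check out (note $S_+^2=-S_-^2=\deg L_--\deg L_+$ always, consistent with your identities), and the averaging/rescaling remarks dispose of the invariance and normalization issues; your closing "conceptual check" is essentially the paper's actual proof, with Apostolov--T{\o}nnesen-Friedman in place of Narasimhan--Seshadri. What each approach buys: yours is self-contained within the Futaki computation already set up in this section and needs no existence theorem for canonical metrics, while the paper's is shorter and yields the stronger fact that every Kähler class on $\cM_0$ is actually CSCK. One cosmetic remark: your final prefactor $\bigl(\tfrac{\bar s^\Omega}{16\pi}-1\bigr)$ is only valid after substituting $S_+^2=S_-^2$ (without that substitution the coefficient of $\Omega\cdot(S_+-S_-)$ would be $\tfrac{\bar s^\Omega}{24\pi}-1$); since it multiplies a quantity you have already shown to be zero, this does not affect the conclusion.
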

\begin{proof}
Here $\cM_0\to\Sigma$ has two non-intersecting holomorphic $S_\pm$ with
vanishing slope. Hence $\cM_0\to\Sigma$ is polystable in the usual
  Mumford sense and $\cMhat_0=\cM_0$ as the parabolic structure is
  empty. It follows that 
  $\cM_0\to\Sigma$ is a flat projective bundle by the
  Narasimhan-Seshadri theorem \cite{NarSes64}. This implies that every
  Kähler class of $\cM_0$ can be represented by a CSCK metric, obtained
  as a local product of metrics of constant curvature. Therefore, the
  Futaki invariant vanishes identically.
\end{proof}

\begin{lemma}
\label{lemma:futs2}
  With the above notations, suppose that $\parmu (S_+)=0$ and that the
  parabolic structure is not empty. Then for
  every open cone $U$ in $H^2(\cMhat_0,\RR)$ such that $\Kr\subset
  U$, there are rational Kähler classes $\Omega\in U\cap \Kc(\cMhat_0)$
  such that $\Fut(\Xi,\Omega)>0$ and such that  $\Fut(\Xi,\Omega)<0$.

In addition the equation $\Fut(\Xi,\cdot)=0$ cuts  $U\cap
\Kc(\cMhat_0)$ along a nonempty regular hypersurface.
\end{lemma}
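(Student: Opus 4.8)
The plan is to compute the Futaki invariant $\Fut(\Xi,\cdot)$ to first order in the areas of the exceptional curves of $\cMhat_0\to\cMbar_0$, around the locus where it degenerates, and to read off the sign of the first–order term from the explicit formulas \eqref{eq:intt}--\eqref{eq:intst} together with Lemma~\ref{lemma:weights}. First I would normalise $\Omega\cdot F=1$ (legitimate since $\Fut(\Xi,\cdot)$ is homogeneous of degree two, hence of scale–invariant sign) and, following \S\ref{sec:bkahl}, record a Kähler class $\Omega$ on $\cMhat_0$ near $\Kr$ by an orbifold Kähler class $\Omega^{orb}=\orbic(\cO_{\cMbar_0}(1))+CF$ on $\cMbar_0$ with $C\gg0$ (so that $\Omega^{orb}$ lies near the ray $\Kr\subset\Kc(\cMbar_0)$, the ray of the fibre class), together with the areas $\delta_j^\pm:=\Omega\cdot E_j^\pm\in(0,\epsilon)$; the area $\Omega\cdot E_0$ of the $(-1)$–curve is then determined by $\Omega\cdot F=1$ and stays close to $1/w>0$. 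Since $\parmu(S_+)=0$, the degenerate–limit formulas proved just above give $\int t\,d\mu\to0$ and $\int st\,d\mu\to0$ as $\delta\to0$; as the right–hand sides of \eqref{eq:intt}--\eqref{eq:intst} are polynomial in the $t_j$ and in $\Omega\cdot\hat S_\pm$ and rational in $\Omega^2>0$, the function $\Fut(\Xi,\cdot)=\bar s^\Omega\int t\,d\mu-\int st\,d\mu$ extends smoothly up to the boundary face $\{\delta=0\}$ (which contains $\Kr$) and vanishes identically there.

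The next step is the first–order expansion in $\delta$. The moment–map values $t_j$ are affine in the $\delta_i^\pm$: the chain curve joining the fixed points $f_{j-1},f_j$ has moment–map length $\tfrac{w_j}{2\pi}(\Omega\cdot E_j)$, so $t_j=-a+\tfrac1{2\pi}\sum_{m\le j}w_m\delta_m$ along the chain, and, because $\Omega\cdot E_0$ compensates in the constraint $\Omega\cdot F=1$, one has $\partial t_i/\partial\delta_j^-=0$ whenever $f_i$ lies on the $\hat S_+$–side of $E_0$, and symmetrically $\partial t_i/\partial\delta_j^+=0$ on the $\hat S_-$–side. Plugging this into \eqref{eq:intt}--\eqref{eq:intst}, using $t_j^2=a^2$ on the face, the weight identities $\sum_{j=1}^k\tfrac1{w_jw_{j+1}}=\alpha$ and $\sum_{j=k+1}^{k+l}\tfrac1{w_jw_{j+1}}=1-\alpha$ of Lemma~\ref{lemma:weights}, and the limiting identities $[\hat S_-]^2+\alpha=\parmu(S_-)$, $[\hat S_+]^2+1-\alpha=\parmu(S_+)$, $\Omega\cdot(\hat S_+-\hat S_-)\to\pardeg L_+-\pardeg L_-=-\parmu(S_+)$ from the degenerate computation, I would obtain, uniformly for $C$ large,
\[
\Fut(\Xi,\Omega)=\sum_{j=1}^k a_j^-(C)\,\delta_j^-+\sum_{j=1}^l a_j^+(C)\,\delta_j^++O(|\delta|^2).
\]

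The step I expect to be the main obstacle is showing that the coefficients $a_j^-(C)$ all carry one sign and the $a_j^+(C)$ the opposite sign, for $C$ sufficiently large. This is a first–order refinement of the zeroth–order computation of Lemma~\ref{lemma:futs1} (where one found $\lim\Fut=(1-\tfrac{\bar s^{\Omega^{orb}_C}}{12\pi})\parmu(S_+)$): one must now keep track of the $\delta$–dependence of $\Omega\cdot\hat S_\pm$, of $\bar s^\Omega=8\pi\,c_1(\cMhat_0)\cdot\Omega/\Omega^2$, and of the quadratic and cubic sums in the $t_j$, evaluating the combinatorial weight factors with Lemma~\ref{lemma:weights}. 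The opposite–side cancellation $\partial t_i/\partial\delta_j^\mp=0$ is precisely what makes the two families of coefficients come out with opposite signs rather than the same sign: the fixed points $f_j$ with $j\le k$ accumulate onto $\hat S_-$, where the moment map takes the value $-a$, while those with $j>k$ accumulate onto $\hat S_+$, where it takes the value $+a$ (when the parabolic structure is trivial no exceptional curves occur at all and $\Fut$ vanishes identically, which is the phenomenon of Lemma~\ref{lemma:futs3}). Since $\bar s^{\Omega^{orb}_C}\to8\pi$, the common positive factor stays bounded away from $0$ for $C$ large.

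Granting this, I would conclude as follows. Because the parabolic structure is non–empty, the continued–fraction expansions \eqref{e1.844}--\eqref{e20.844} of $\alpha$ and of $1-\alpha$ are both non–trivial, so $k\ge1$ and $l\ge1$ for at least one marked point and the list $\{a_j^\pm(C)\}$ contains both a strictly positive and a strictly negative entry. Hence the linear form $\ell(\delta):=\sum a_j^-\delta_j^-+\sum a_j^+\delta_j^+$ takes both signs on the open positive orthant $\{\delta_j^\pm>0\}$ and its kernel meets that orthant transversally; for $\epsilon$ small the $O(|\delta|^2)$ error preserves this, so $U\cap\Kc(\cMhat_0)$ contains classes with $\Fut(\Xi,\Omega)>0$ and classes with $\Fut(\Xi,\Omega)<0$, rational ones existing by density and continuity. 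Finally $d\Fut(\Xi,\cdot)=d\ell+O(|\delta|)$ is non–zero throughout a neighbourhood of the face $\{\delta=0\}$, so by the implicit function theorem $\{\Fut(\Xi,\cdot)=0\}$ is a smooth hypersurface of $U\cap\Kc(\cMhat_0)$, non–empty by the sign change just noted; and since $\{\ell=0\}$ is a cone with vertex on $\{\delta=0\}$, this hypersurface accumulates onto $\{\delta=0\}$ and, as $C\to\infty$, onto $\Kr$, which is therefore contained in its closure.
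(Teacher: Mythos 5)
Your overall strategy is the same as the paper's: observe that when $\parmu(S_+)=0$ the Futaki invariant extends to the degenerate face of orbifold classes and vanishes there (this is \eqref{eq:limom} with $\parmu(S_+)=0$), expand $\Fut(\Xi,\cdot)$ to first order in the areas of the exceptional curves using \eqref{eq:intt}--\eqref{eq:intst} and Lemma~\ref{lemma:weights}, show the differential takes both signs on the positive cone of areas, and finish by density (rationality), a higher-order perturbation to land in $\Kc(\cMhat_0)$, and the implicit function theorem for the regular hypersurface. But the pivotal step is missing, and you flag it yourself: you never compute the first-order coefficients $a_j^\pm(C)$, you only assert the expected sign dichotomy ("granting this, I would conclude"). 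The heuristic you offer --- that $\partial t_i/\partial\delta_j^\mp=0$ across $E_0$ forces the two families of coefficients to have opposite signs --- does not establish anything: each coefficient is a sum of several competing contributions (the variation of $\Omega\cdot(\hat S_+-\hat S_-)$, of $\bar s^\Omega$, of the cubic sum $\sum t_j^3/(w_jw_{j+1})$, and of the quadratic sum $\sum(w_j^{-1}-1)(t_{j+1}^2-t_j^2)$), and these do not all point the same way; for instance the quadratic term at the saddle adjacent to $E_0$, where $t_{j+1}^2-t_j^2\approx 2a(u_k-v_{k+1})$ straddles the two sides, enters with the sign opposite to the one your heuristic predicts. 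So the stronger statement you rely on (all $a_j^-$ of one sign, all $a_j^+$ of the other) is both unproved and not obviously true, and without some actual evaluation of the weight combinatorics the sign change of $\Fut$ on the positive orthant --- which is the entire content of the lemma --- is not established.

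The paper avoids this by not attempting the full expansion: it restricts to the two-parameter slice $\Omega\cdot E_1^\pm=\tau^\pm$, $\Omega\cdot E_j^\pm=0$ for $j>1$ (equivalently $t_j^-=\tfrac{\tau^-}{4\pi}-a$, $t_j^+=a-\tfrac{\tau^+}{4\pi}$ for all $j\ge1$), computes $\Omega\cdot(\hat S_+-\hat S_-)=\alpha\tau^-+(\alpha-1)\tau^+$ from $\hat S_+-\hat S_-=(\deg L_--\deg L_+)F-\hat E$, Lemma~\ref{lemma:weights} and $\pardeg L_+=\pardeg L_-$, and finds that the differential of $\Fut$ at $\tau^\pm=0$ is a positive multiple of $(1-\alpha)\,d\tau^+-\alpha\,d\tau^-$ once $C$ is large (so $\bar s^\Omega$ is near $8\pi$). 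Since $0<\alpha<1$, this single explicit directional computation already yields both signs inside the quadrant $\tau^\pm>0$ and the submersion property needed for the regular hypersurface --- which is all the lemma requires. To repair your argument you would either have to carry out the analogous explicit evaluation (on your full parameter space or on a convenient slice), or replace the unproved sign dichotomy by a computation of at least two directional derivatives of opposite sign within the positive cone; as written, the conclusion does not follow.
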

\begin{proof}
By \eqref{eq:limom}, we have $\lim_{\Omega\to\Omega^{orb}_C}\Fut(\Xi,
\Omega)=0$. 
This corresponds to
the limiting value of the Futaki invariant when taking  the parameters $a=t^+_j=-t^-_j$ for
$j\geq 1$.
The idea to prove the  Lemma is to compute the partial
derivatives of the Futaki invariant at $\Omega^{orb}_C$. In fact it
will suffice to consider variations of the Kähler class corresponding
to the parameters 
$t_j^-= \frac {\tau^-}{4\pi}-a$ and $t_j^+=a-\frac{\tau^+}{4\pi}$  for
$j\geq 1$ for $\tau^\pm>0$ sufficiently small.
By definition  the corresponding (orbifold) class $\Omega$ satisfies
$\Omega\cdot E_1^\pm =\tau^\pm$ and  $\Omega\cdot E^\pm_j = 0$ for $j>1$.
Using Lemma \ref{lemma:weights} and the fact that $\Omega\cdot F =1$, we find 
$\Omega\cdot E_0 = \frac {1-(\tau^++\tau^-)}q$.

By the adjunction formula $\hat S_+ = S_+-\hat E$. Furthermore, in
terms of Poincaré dual, $S_\pm
= c_1(\cO_{\cM_0}(1)) - \deg (L_\pm) F$. Thus $\hat S_+-\hat S_- = (\deg (L_-)-\deg (L_+))F
-\hat E$ and it follows that $\Omega\cdot (\hat S_+-\hat S_-) = \deg
L_- - \deg L_+ - \tau ^+ - \alpha(1-(\tau^++\tau^-)) = \pardeg
L_--\pardeg L_+ +\alpha\tau^-+(\alpha-1)\tau^+$. By assumption $L_+$
and $L_-$ have the same parabolic degree, therefore 
$$
\Omega\cdot (\hat S_+-\hat S_-) = \alpha\tau^-+(\alpha-1)\tau^+.
$$
Finally, the formula for the Futaki invariant for a variation
$\tau^\pm$ can be written
\begin{multline*}
\Fut(\tau^\pm) =  (1-\frac 6{96\pi}\bar s^\Omega)((1-\alpha)\tau^+-\alpha\tau^-)\\
-64\pi^3\frac {\bar s^\Omega}{96\pi}\left (\sum_{j=1}^k\frac
  {(t^-_j)^3+a^3}{w_j^-w_{j+1}^-} + \sum_{j=1}^l\frac
  {(t^+_j)^3-a^3}{w_j^-w_{j+1}^-}
\right )
\end{multline*}

If $\Omega$ is a sufficiently small perturbation of $\Omega^{orb}_C$
and $C>0$ is large enough, then $\bar s^\Omega$ is very close to $8\pi$. It
follows that the Futaki invariant is of the form
$\Fut(\tau^\pm)=C_1f_1 +C_2f_1$ where $C_1,C_2>0$,
$f_1=(1-\alpha)\tau^+-\alpha \tau^-$ and
$$
f_2=-\sum_{j=1}^k\frac
  {(t^-_j)^3+a^3}{w_j^-w_{j+1}^-} - \sum_{j=1}^l\frac
  {(t^+_j)^3-a^3}{w_j^-w_{j+1}^-}
$$ 
The differential are easily computed at $\tau^\pm=0$ and they are
positive multiples of
$$
 (1-\alpha)d\tau^+ -\alpha d\tau^-.
$$

It follows that the variation of $\frac {\del f_j} {\del\tau^-} <0$. In
particular, $f_j$ are negative for certain arbitrarily small values of
$\tau^\pm>  0$.
It follows that $\Fut(\tau^\pm)$ must be negative as well.
Similarly  $\frac {\del f_j} {\del\tau^+} >0$ and the Futaki invariant also
take positive values for certain values of $\tau^\pm> 0$ arbitrarily
small.

By density, we can always assume that the parameters $C$ and
$\tau^\pm$ are chosen suitably so that $\Omega$ is rational. We can
also perturb the cohomology class $\Omega$ by higher order terms so
that it is a Kähler class on $\cMhat_0$ (and not just an orbifold
Kähler class). The
first part of the lemma follows.

Notice that our computation shows  that the
Futaki invariant is a submersion vanishing at $\Omega^{orb}_C$. So the
surface given by the vanishing of the Futaki invariant is regular near
$\Omega_C^{orb}$. The
fact that the partial derivatives $\frac {\del \cF} {\del\tau^\pm}$ have
opposite signs insures that the surface $\Fut(\Xi,\cdot)=0$ cuts the
quadrant $\tau^\pm>0$ along a nonempty set, hence cuts the Kähler cone and the lemma follows. 
\end{proof}

\bibliographystyle{abbrv}
\bibliography{kpstab,rollin}

\end{document}